\newcommand{\1}{{\bf 1}}
\newcommand{\maru}[1]{{\ooalign{\hfil#1\/\hfil\crcr
\raise.167ex\hbox{\mathhexbox20D}}}}
\newcommand{\ruby}[2]{%
 \leavevmode
 \setbox0=\hbox{#1}%
 \setbox1=\hbox{\tiny #2}%
 \ifdim\wd0>\wd1 \dimen0=\wd0 \end{lemma}se \dimen0=\wd1 \fi
 \hbox{%
   \kanjiskip=0pt plus 2fil
   \xkanjiskip=0pt plus 2fil
   \vbox{%
     \hbox to \dimen0{%
       \tiny \hfil#2\hfil}%
     \nointerlineskip
     \hbox to \dimen0{\mathstrut\hfil#1\hfil}}}}
\newcommand{\la}{\langle}
\newcommand{\ra}{\rangle}
\newcommand{\Z}{\mathbb{Z}}
\newcommand{\C}{\mathbb{C}}
\newcommand{\Aut}{\mathrm{Aut}}
\newcommand{\be}{\beta}
\newcommand{\al}{\alpha}
\makeatletter \@addtoreset{equation}{section}
\theoremstyle{plain}
\newtheorem{theorem}{Theorem}[section]
\newtheorem{proposition}[theorem]{Proposition}
\newtheorem{lemma}[theorem]{Lemma}
\newtheorem{corollary}[theorem]{Corollary}
\theoremstyle{definition}
\newtheorem{definition}[theorem]{Definition}
\theoremstyle{remark}
\numberwithin{equation}{section}
\title[Classification of Ising vectors in $V_L^+$]{Classification of Ising vectors\\ in the vertex operator algebra $V_L^+$}
\subjclass[2000]{Primary  17B69}
\author[H. Shimakura]{Hiroki Shimakura}
\address[H. Shimakura]{Department of Mathematics,
Aichi University of Education,
1 Hirosawa, Igaya-cho, Kariya-city, Aichi, 448-8542 Japan}%
\email {shima@auecc.aichi-edu.ac.jp}%
\keywords{vertex operator algebra, lattice vertex operator algebra, Ising vector}
\date{}
\thanks{H.\ Shimakura was partially supported by Grants-in-Aid for Scientific Research (No. 23540013), JSPS}
\newcommand{\sfr}[2]{\leavevmode\kern-.1em
  \raise.5ex\hbox{\the\scriptfont0 #1}\kern-.1em
  /\kern-.15em\lower.25ex\hbox{\the\scriptfont0 #2}}
\begin{document}

\begin{abstract} Let $L$ be an even lattice without roots.
In this article, we classify all Ising vectors in the vertex operator algebra $V_L^+$ associated with $L$.
\end{abstract}
\maketitle


\section*{Introduction}
In vertex operator algebra (VOA) theory, the simple Virasoro VOA $L(1/2,0)$ of central charge $1/2$ plays important roles.
In fact, for each embedding, an automorphism, called a $\tau$-involution, is defined using the representation theory of $L(1/2,0)$ (\cite{Mi}).
This is useful for the study of the automorphism group of a VOA.
For example, this construction gives a one-to-one correspondence between the set of subVOAs of the moonshine VOA isomorphic to $L(1/2,0)$ and that of elements in certain conjugacy class of the Monster (\cite{Mi,Ho}).

Many properties of $\tau$-involutions are studied using Ising vectors, weight $2$ elements generating $L(1/2,0)$.
For example, the $6$-transposition property of $\tau$-involutions was proved in \cite{Sa} by classifying the subalgebra generated by two Ising vectors.
Hence it is natural to classify Ising vectors in a VOA.
For example, this was done in \cite{Lam,LSY} for code VOAs. 
However, in general, it is hard to even find an Ising vector.

Let $L$ be an even lattice and $V_L$ the lattice VOA associated with $L$.
Then the subspace $V_L^+$ fixed by a lift of the $-1$-isometry of $L$ is a subVOA of $V_L$.
There are two constructions of Ising vectors in $V_L^+$ related to sublattices of $L$ isomorphic to $\sqrt2A_1$ (\cite{DMZ}) and $\sqrt2E_8$ (\cite{DLMN,Gr}).

The main theorem of this article is the following:
\setcounter{section}{2}
\setcounter{theorem}{4}
\begin{theorem}\label{Conj} Let $L$ be an even lattice without roots and $e$ an Ising vector in $V_L^+$.
Then there is a sublattice $U$ of $L$ isomorphic to $\sqrt2A_1$ or $\sqrt2E_8$ such that $e\in V_{U}^+$.
\end{theorem}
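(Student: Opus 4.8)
Since $L$ has no roots it has no vector of norm $2$, so a weight count shows that $(V_L^+)_2$ is spanned by the Cartan vectors $h(-1)k(-1)\vac$ (with $h,k\in\mathfrak h:=\C\otimes_\Z L$) together with the vectors $E(\alpha):=e^\alpha+\theta(e^\alpha)$, where $\theta$ is the standard lift of the $-1$-isometry and $\alpha\in L$ has $\la\alpha,\alpha\ra=4$. Thus I would write
\[
  e=\omega_T+\sum_{\alpha\in\bar S}c_\alpha E(\alpha),
\]
where $\omega_T:=\tfr12\sum_i(Ta_i)(-1)a^i(-1)\vac$ for a self-adjoint operator $T$ on $\mathfrak h$ (and dual bases $(a_i),(a^i)$), $\bar S$ is a set of representatives for the pairs $\{\pm\alpha\}$ of norm-$4$ vectors actually occurring in $e$ (so all $c_\alpha\neq0$), and $S:=\bar S\cup(-\bar S)$. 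I normalize the $2$-cocycle so that $e^\alpha_{(3)}e^{-\alpha}=\vac$ for $\alpha\in S$.

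\textbf{Step 2 (structure equations).} The content of ``$e$ is an Ising vector'' is exactly $e_{(1)}e=2e$ and $e_{(3)}e=\tfr14\vac$: the identities $e_{(0)}e=L(-1)e$ and $e_{(2)}e=0$ hold automatically (the latter because $(V_L^+)_1=0$), and once the first two hold $\la e\ra$ is the \emph{simple} VOA $\ising$ since the invariant form on $V_L^+$ is positive definite. Expanding both identities in the basis of Step 1 — and using that a rootless $L$ forces $\la\alpha,\beta\ra\in\{0,\pm1,\pm2,\pm4\}$ for $\alpha,\beta\in S$, as $\alpha\pm\beta$ would otherwise be a root — I get three families of relations. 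First, an operator identity on $\mathfrak h$:
\[
  T^2+\sum_{\alpha\in\bar S}c_\alpha^2\,\alpha\alpha^{\mathsf t}=T,\qquad\text{where }\alpha\alpha^{\mathsf t}(h):=\la h,\alpha\ra\alpha.
\]
Second, for each norm-$4$ vector $\gamma$ (with the convention $c_\gamma:=0$ if $\gamma\notin S$):
\[
  \la T\gamma,\gamma\ra\,c_\gamma+\sum_{\substack{\mu,\nu\in S,\ \mu+\nu=\gamma\\ \la\mu,\nu\ra=-2}}\varepsilon(\mu,\nu)\,c_\mu c_\nu=2c_\gamma.
\]
Third, a scalar identity $\tfr12\tr(T^2)+2\sum_{\alpha\in\bar S}c_\alpha^2=\tfr14$. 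Taking the trace of the operator identity and subtracting the scalar one yields the clean invariant $\tr T=\tfr12$; in particular $S\neq\emptyset$, since for $S=\emptyset$ the operator identity reads $T^2=T$, so $T$ is an orthogonal projection with $\tr T\in\Z_{\ge0}$. Evaluating the operator identity on $\ker T$ shows (using positivity of the form) that $\ker T\perp S$.

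\textbf{Step 3 (identification of $U$).} Put $M:=\la S\ra\subseteq L$. Using the operator identity one checks that $T$ preserves the orthogonal splitting $\mathfrak h=(M\otimes\R)\oplus(M\otimes\R)^\perp$ and restricts to an orthogonal projection on the second summand; the associated conformal vector commutes with $e$, its central charge is a nonnegative integer, and it cannot exceed $\tfr12$, so it is $0$. Hence $\Imm T\subseteq M\otimes\R$ and $e\in V_M^+$, and it remains to prove $M\cong\sqrt2A_1$ or $\sqrt2E_8$. The core of the argument is combinatorial: one shows that all inner products within $S$ are \emph{even} — if $\la\alpha,\beta\ra=\pm1$ for some $\alpha,\beta\in S$, the linear identities at the norm-$4$ vectors near $\alpha$ and $\beta$ determine enough entries of $T$ to make the operator identity numerically inconsistent — and that $S$ is connected for the relation $\la\alpha,\beta\ra\neq0$, so that $M=\sqrt2N$ for an indecomposable lattice $N$ generated by its norm-$2$ vectors, i.e.\ $N$ of type $A_n$, $D_n$ or $E_n$. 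Feeding each such root system into the operator and linear identities — equivalently, computing the central charge of the diagonal conformal vector of $\sqrt2N$, which equals $\tfr12$ precisely for $N=A_1$ and $N=E_8$ — eliminates all cases except these two and forces $S$ to be the full norm-$4$ shell of $M$. Taking $U:=M$ finishes the proof.

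\textbf{Expected main obstacle.} The genuine difficulty is in Step 3, since a priori $S$ is not assumed to be a full shell, nor the $c_\alpha$ to be equal or even real. The plan there is to first propagate information with the family of linear identities (e.g.\ $\alpha,\beta\in S$ with $\la\alpha,\beta\ra=-2$ force $\alpha+\beta\in S$, with $c_{\alpha+\beta}$ tied to $c_\alpha,c_\beta$), then to combine the operator identity with the trace constraints $\tr T=\tfr12$ and $\tr(T^2)<\tfr12$ to bound the eigenvalues of $T$ and the cardinality of $\bar S$, and finally to run through the exceptional root lattices. Keeping the cocycle signs $\varepsilon(\mu,\nu)$ coherent throughout, and handling possibly non-real coefficients $c_\alpha$ (or first reducing to a real Ising vector), are the principal technical nuisances.
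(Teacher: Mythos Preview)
Your Steps 1--2 are set up correctly, but Step 3, where all the content lies, rests on two genuinely unsupported claims. The first is implicit reality of the $c_\alpha$: your deductions that $\ker T\perp S$, that $T$ preserves the splitting $(M\otimes\C)\oplus(M\otimes\C)^\perp$, and that $T|_{(M\otimes\C)^\perp}$ is an orthogonal projection with nonnegative-integer trace, all require $\sum_\alpha c_\alpha^{2}\,\alpha\alpha^{\mathsf t}$ to be positive semidefinite, i.e.\ $c_\alpha^2>0$, and you offer no reduction to that case. The second, and more serious, is the ``combinatorial core'' that inner products in $S$ are even. When $\la\alpha,\beta\ra=\pm1$ the Griess product $E(\alpha)\cdot E(\beta)$ vanishes (Lemma~\ref{Lprod}), so your linear identities do not couple $c_\alpha$ to $c_\beta$ at all; obtaining a contradiction then means solving the full operator identity on $\Span_\C S$, and ``numerical inconsistency'' is not an argument for general $S$.

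The paper bypasses both issues with a completely different idea. It uses Proposition~\ref{LS56} to write $\tau_e\in O(\hat L)/\la\theta\ra$, lifts it to $g\in\Aut(L)$, and proves the key Lemma~\ref{L4e}: $g(\beta)\in\{\pm\beta\}$ for every $\beta\in L(4;e)$, by comparing the $\tfr{1}{16}$-eigenspace of $e$ on the Griess algebra with the $(-1)$-eigenspace of $\tau_e$. This places $e$ inside a tensor product $V_{L^{e,+}\oplus W^+}^+\otimes V_{L^{e,-}\oplus W^-}^+$ on which $\tau_e$ is trivial (Lemma~\ref{Lfix}); the reciprocity Lemma~\ref{LLSY} then forces $\tau_{\omega^\pm(\beta)}(e)=e$ for every $\beta\in L(4;e)$, which by Lemma~\ref{prp:3.2} is exactly the evenness statement $\la\beta,L(4;e)\ra\subset2\Z$. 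At that point one is in the case where the ambient lattice divided by $\sqrt2$ is even, and the paper simply invokes Proposition~\ref{TLSY}. Note that your route would in addition have to reprove that last proposition from scratch, and your parenthetical about the central charge of the diagonal conformal vector of $\sqrt2N$ concerns one specific Virasoro element, not an arbitrary Ising vector in $V_{\sqrt2N}^+$.
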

\setcounter{section}{0}
We note that this theorem was conjectured in \cite{LSY} and that if $L/\sqrt2$ is even and if $L$ is the Leech lattice, then this theorem was proved in \cite{LSY} and in \cite{LS}, respectively.
We also note that if $L$ has roots then the automorphism group of $V_L^+$ is infinite, and $V_L^+$ may have infinitely many Ising vectors.

\medskip

In this article, we prove Theorem \ref{Conj}, and hence we classify all Ising vectors in $V_L^+$.
Our result shows that the study of $\tau$-involutions of $V_L^+$ is essentially equivalent to that of sublattices of $L$ isomorphic to $\sqrt2E_8$ (cf.\ \cite{GL,GL2}).

The key is to describe the action of the $\tau$-involution on the Griess algebra $B$ of $V_L^+$.
Let $e$ be an Ising vector in $V_L^+$ and $L(4;e)$ the norm $4$ vectors in $L$ which appear in the description of $e$ with respect to the standard basis of $(V_L^+)_2$ (see Section 2 for the definition of $L(4;e)$).
By \cite{LS}, the $\tau$-involution $\tau_e$ associated to $e$ is a lift of an automorphism $g$ of $L$.
We show in Lemma \ref{L4e} that $g$ is trivial on $\{\{\pm v\}\mid v\in L(4;e)\}$.
This lemma follows from the decomposition of $B$ with respect to the adjoint action of $e$ (\cite{LHY}), the action of $\tau_e$ on it (\cite{Mi}) and the explicit calculations on the Griess algebra (\cite{FLM}).
By this lemma, we can obtain a VOA $V$ containing $e$ on which $\tau_e$ acts trivially.
By \cite{LSY} $e$ is fixed by the group $A$ generated by $\tau$-involutions associated to elements in $L(4;e)$.
Hence $e$ belongs to the subVOA $V^A$ of $V$ fixed by $A$.
Using the explicit action of $A$, we can find a lattice $N$ satisfying $e\in V_N^+$ and $N/\sqrt2$ is even.
This case was done in \cite{LSY}.

\section{Preliminaries}

\subsection{VOAs associated with even lattices} \label{sec:3}
In this subsection, we review the VOAs $V_L$ and $V_L^+$ associated with even lattice $L$ of rank $n$ and their automorphisms.
Our notation for lattice VOAs here is standard (cf. \cite{FLM}).

Let $L$ be a (positive-definite) even lattice with inner product $\la\,\cdot\, , \,\cdot\,\ra$. 
Let ${H} = \C \otimes_{\Z} L$ be an abelian Lie algebra and $\hat{{H}} = {H} \otimes \C [t,t^{-1}] \oplus \C c$ its affine Lie algebra. 
Let ${\hat{{H}}}^-={H}\otimes t^{-1}\C[t^{-1}]$ and let $S(\hat{{H}}^-)$ be the symmetric algebra of $\hat{{H}}^-$. 
Then $M_H(1) =S(\hat{{H}}^-)\cong\C[h(m)\mid h \in {H}, m < 0]\cdot {\bf 1}$ is the unique irreducible $\hat{{H}}$-module such that $h(m) \cdot {\bf 1} = 0$ for $h \in {H}$, $m \ge 0$ and $c=1$, where $h(m) = h \otimes t^m$. 
Note that $M_H(1)$ has a VOA structure.

The twisted group algebra $\C\{L\}$ can be described as follows.
Let $\langle\kappa\rangle$ be a cyclic group of order $2$ and
$1\to \langle\kappa\rangle  \to \hat{L} \to L\to 1$
a central extension of $L$ by $\langle\kappa\rangle$ satisfying the commutator relation $[e^\alpha,e^\beta]=\kappa^{\langle \alpha, \beta \rangle}$ for $\alpha, \beta \in L$.
Let $L\to \hat{L}, \alpha\mapsto e^\alpha$ be a section and $\varepsilon(,):L\times L\to \langle\kappa\rangle$ the associated $2$-cocycle, that is, $e^\alpha e^\beta=\varepsilon(\alpha,\beta)e^{\alpha+\beta}$.
We may assume that $\varepsilon(\alpha,\alpha)=\kappa^{\langle\alpha,\alpha\rangle/2}$ and $\varepsilon(,)$ is bilinear by \cite[Proposition 5.3.1]{FLM}. 
The twisted group algebra is defined by
\[
\C\{L\}\cong \C[\hat{L}]/(\kappa+1) = {\rm Span}_{\C}\{e^\alpha\mid
\alpha\in L\},
\]
where $\C[\hat{L}]$ is the usual group algebra of the group $\hat{L}$.
The lattice VOA $V_L$ associated with $L$ is defined to be $M_H(1) \otimes \C\{L\}$ (\cite{Bo,FLM}). 

For any sublattice $E$ of $L$, let $\C\{E\}={\rm Span}_\C\{e^\alpha \mid \alpha\in E\}$ be a
subalgebra of $\C\{L\}$ and let $H_E=\C\otimes_\Z E$ be a subspace of $H= \C\otimes_\Z L$. Then the subspace
$S(\hat{H}_E^-)\otimes \C\{E\}$ forms a subVOA of $V_L$ and it is isomorphic to the lattice VOA $V_E$.

Let $O(\hat{L})$ be the subgroup of $\Aut(\hat{L})$ induced from $\Aut(L)$.
By \cite[Proposition 5.4.1]{FLM} there is an exact sequence of groups
\begin{equation*}
1\to{\rm Hom}(L,\Z/2\Z)\to O(\hat{L})\ \bar{\to}\ \Aut(L)\to 1.\label{EqC}
\end{equation*}
Note that for $f\in O(\hat{L})$
\begin{equation}
f(e^\alpha)\in\{\pm e^{\bar{f}(\alpha)}\}.\label{Eq:f}
\end{equation}
By \cite[Corollary 10.4.8]{FLM}, $f\in O(\hat{L})$ acts on $V_L$ as an automorphism as follows:
\begin{equation}
f(h_{i_1}(n_1)h_{i_2}(n_2)\dots h_{i_k}(n_k)\otimes e^\alpha)=\bar{f}(h_{i_1})(n_1)\bar{f}(h_{i_2})(n_2)\dots \bar{f}(h_{i_k})(n_k)\otimes f(e^\alpha),\label{Eq:fa}
\end{equation}
where $n_i\in\Z_{<0}$ and $\alpha\in L$.
Hence $O(\hat{L})$ is a subgroup of $\Aut(V_L)$.

Let $\theta$ be the automorphism of $\hat{L}$ defined by $\theta(e^\alpha)=e^{-\alpha}$, $\alpha\in L$.
Then $\bar{\theta}=-1\in\Aut(L)$.
Using (\ref{Eq:fa}) we view $\theta$ as an automorphism of $V_L$.
Let $V_L^+=\{v\in V_L\mid \theta (v)=v\}$ be the subspace of $V_L$ fixed by $\theta$. 
Then  $V_L^+$ is a subVOA of $V_L$.
Since $\theta$ is a central element of $O(\hat{L})$, the quotient group $O(\hat{L})/\langle\theta\rangle$ is a subgroup of $\Aut(V_L^+)$.
Note that $V_L^+$ is a simple VOA of CFT type.

Later, we will consider the subVOA of $V_L^+$ generated by the weight $2$ subspace.
\begin{lemma}\label{Lsub} {\rm (cf.\ \cite[Proposition 12.2.6]{FLM})} Let $L$ be an even lattice without roots.
Let $N$ be the sublattice of $L$ generated by $L(4)$.
Then the subVOA of $V_L^+$ generated by $(V_L^+)_2$ is $(V_N\otimes M_{H'}(1))^+$, where ${H}'=(\langle N\rangle_\C)^\perp$ in $\langle L\rangle_\C$.
\end{lemma}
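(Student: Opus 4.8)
The plan is to prove the two inclusions separately, ``$\subseteq$'' being routine and the reverse inclusion being the real content. Identify $V_N\otimes M_{H'}(1)$ with the subVOA $U:=M_H(1)\otimes\C\{N\}=\bigoplus_{\beta\in N}M_H(1)\otimes e^\beta$ of $V_L$ (using $H=H_N\oplus H'$), on which $\theta$ acts by $\theta(u\otimes e^\beta)=\theta(u)\otimes e^{-\beta}$, with $\theta$ restricting to the canonical involution of $M_H(1)$; let $W$ be the subVOA of $V_L^+$ generated by $(V_L^+)_2$. Since $L$ has no roots, $(V_L^+)_2$ is spanned by the $h(-1)h'(-1)\mathbf 1$ ($h,h'\in H$) and the $e^\alpha+e^{-\alpha}$ ($\alpha\in L(4)$) (the standard-basis elements $h(-2)\mathbf 1$ are $\theta$-odd, and there are no norm-$2$ vectors); the first family lies in $M_H(1)\subseteq U$ and, as $L(4)\subseteq N$, the second lies in $U$, so $W\subseteq U^+$. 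As $N\subseteq L$ is again root-free and $N(4)=L(4)$, the same description gives $(U^+)_2=(V_L^+)_2$, so it remains only to prove that $U^+$ is generated by its weight-$2$ subspace.

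The first step is $M_H(1)^+\subseteq W$. Its weight-$2$ part $\Sym^2 H$ already lies in $W$, and one obtains the remaining generators of $M_H(1)^+$ inside $W$ from the known generation of $M_H(1)^+$ by low-weight elements: weight-$4$ vectors such as $(h(-1)^2\mathbf 1)_{-1}(h'(-1)^2\mathbf 1)=h(-1)^2h'(-1)^2\mathbf 1$ for $h\perp h'$ when $\dim H\ge2$, while if $\dim H=1$ (so $N\cong\sqrt2 A_1$) one may instead use that $V_{\sqrt2A_1}^+$ is generated by its two Ising vectors $\tfrac1{16}\alpha(-1)^2\mathbf 1\pm\tfrac14(e^\alpha+e^{-\alpha})$, which lie in $(V_L^+)_2$. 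In particular $\omega\in W$ and $W$ is closed under the action of $M_H(1)^+$.

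The second step is the structural observation that, for $\beta\in N$, if $e^\beta+e^{-\beta}\in W$ then the whole block $P_\beta:=\{u\otimes e^\beta+\theta(u)\otimes e^{-\beta}\mid u\in M_H(1)\}$ lies in $W$: via $u\leftrightarrow u\otimes e^\beta+\theta(u)\otimes e^{-\beta}$ the $M_H(1)^+$-module $P_\beta$ is identified with $M_H(1)=M_H(1)^+\oplus M_H(1)^-$, two irreducibles with lowest-weight spaces $\C\mathbf 1$ and $H$; here $e^\beta+e^{-\beta}$ corresponds to $\mathbf 1$ and $L_{-1}(e^\beta+e^{-\beta})=\beta(-1)(e^\beta-e^{-\beta})$ to $\beta(-1)\mathbf 1\in M_H(1)^-$, so both summands are reached. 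The third step is then to prove $e^\beta+e^{-\beta}\in W$ for every $\beta\in N$, by induction on $\langle\beta,\beta\rangle$ with a secondary induction on the least number of vectors of $L(4)$ summing to $\beta$; granting this, $U^+=M_H(1)^+\oplus\bigoplus_{\{\beta,-\beta\},\,\beta\neq0}P_\beta\subseteq W$ and the lemma follows. The cases $\langle\beta,\beta\rangle\in\{0,4\}$ are immediate. Otherwise write $\beta=\gamma+\beta'$ with $\gamma$ from a shortest $L(4)$-expression of $\beta$ and consider $(e^\gamma+e^{-\gamma})_m(e^{\beta'}+e^{-\beta'})$ for a suitable $m$: both factors lie in $W$ (the second by induction, since $\langle\beta',\beta'\rangle<\langle\beta,\beta\rangle$ or $\beta'$ has a shorter expression), and from the operator product $Y(e^\gamma,z)e^{\pm\beta'}=\varepsilon(\gamma,\pm\beta')\,z^{\langle\gamma,\pm\beta'\rangle}\exp\!\big(\sum_{n\ge1}\tfrac{\gamma(-n)}{n}z^n\big)e^{\gamma\pm\beta'}$ one reads off that this product equals a nonzero multiple of $e^\beta+e^{-\beta}$ plus terms lying in blocks $P_{\beta''}$ with $\langle\beta'',\beta''\rangle<\langle\beta,\beta\rangle$, which are already in $W$ by the second step and the inductive hypothesis; hence $e^\beta+e^{-\beta}\in W$.

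The main obstacle is this final induction, which is also exactly where root-freeness is needed: one must arrange that the ``error'' vectors $\beta''$ produced by the operator product always have strictly smaller norm (or equal norm but a shorter $L(4)$-expression), and this requires a case analysis of the possible inner products $\langle\gamma,\beta\rangle$ for $\gamma\in L(4)$ together with a careful choice of the mode $m$. The absence of roots removes precisely the bad configurations — such as $\langle\gamma,\delta\rangle=\pm3$ for $\gamma,\delta\in L(4)$, or $\langle\gamma,\beta\rangle=4$ with $\beta-\gamma$ of norm $2$ — that would otherwise force an error term of norm $\ge\langle\beta,\beta\rangle$ and break the descent. This lattice-combinatorial bookkeeping, together with the explicit vertex-operator identities, is the analogue for root-free $L$ of \cite[Proposition~12.2.6]{FLM}, while the second step is the device that promotes the statement from $V_N^+$ to all of $(V_N\otimes M_{H'}(1))^+$.
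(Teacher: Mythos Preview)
The paper gives no proof of this lemma; it only records the citation ``cf.\ \cite[Proposition 12.2.6]{FLM}'', so there is nothing to compare against beyond the FLM template itself. Your outline is precisely the FLM-style argument adapted to $V_L^+$, and the overall strategy --- the easy inclusion, then $M_H(1)^+\subseteq W$, then block-by-block generation of $U^+$ via an induction on $\langle\beta,\beta\rangle$ --- is the right one.

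One point in your second step is misstated. The bijection $u\leftrightarrow u\otimes e^\beta+\theta(u)\otimes e^{-\beta}$ identifies $P_\beta$ with the Fock module $M_H(1,\beta)$ as an $M_H(1)^+$-module, \emph{not} with $M_H(1)=M_H(1,0)$: the zero modes $h(0)$ act as $\langle h,\beta\rangle$ on $e^\beta$, not as $0$. Fortunately this strengthens rather than weakens your conclusion, since $M_H(1,\beta)$ is irreducible over $M_H(1)^+$ for $\beta\neq0$; hence $P_\beta$ is already generated by $e^\beta+e^{-\beta}$ alone and the appeal to $L_{-1}(e^\beta+e^{-\beta})$ is unnecessary.

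There is also a boundary case you should flag. Your treatment of $\dim H=1$ assumes $N\cong\sqrt2A_1$, but if $L$ has rank $1$ and $L(4)=\emptyset$ then $N=0$, $H'=H$, and the lemma as stated would assert that $\C\omega$ generates $M_H(1)^+$, which is false at $c=1$. The statement therefore tacitly excludes this degenerate situation (harmless for the paper's applications, where the relevant lattices always contain norm-$4$ vectors or have $H'$ of rank $\neq1$), but your write-up should note the hypothesis rather than silently assume $L(4)\neq\emptyset$ in rank $1$. Apart from these two points your sketch is sound and matches the intended FLM argument.
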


\subsection{Ising vectors and $\tau$-involutions}
In this subsection, we review Ising vectors and corresponding $\tau$-involutions.

\begin{definition}
A weight $2$ element $e$ of a VOA is called an \textsl{Ising vector} if the vertex subalgebra generated by $e$ is isomorphic to the simple Virasoro VOA of central charge $1/2$ and $e$ is its conformal vector.
\end{definition}
For an Ising vector $e$, the automorphism $\tau_e$, called the \textsl{$\tau$-involution} or \textsl{Miyamoto involution}, was defined in (\cite[Theorem 4.2]{Mi}) based on the representation theory of the simple Virasoro VOA of central charge $1/2$ (\cite{DMZ}).

Let $V$ be a VOA of CFT type with $V_1=0$.
Then the first product $(a,b)\mapsto a\cdot b=a_{(1)}b$ provides a (nonassociative) commutative algebra structure on $V_2$.
This algebra $V_2$ is called the \textsl{Griess algebra} of $V$.
The action of $\tau_e$ on the Griess algebra was described in \cite{LHY} as follows:

\begin{lemma}\label{LHLY}{\rm \cite[Lemma 2.6]{LHY}} Let $V$ be a simple VOA of CFT type with $V_1=0$ and $e$ an Ising vector in $V$.
Then $B=V_2$ has the following decomposition with respect to the adjoint action of $e$:
\begin{equation*}
B=\C e\oplus B^e(0)\oplus B^e(1/2)\oplus B^e(1/16),\label{EqB}
\end{equation*} where $B^e(k)=\{v\in B\mid e\cdot v=kv\}$.
Moreover, the automorphism $\tau_e$ acts on $B$ as follows:
\begin{equation*}
1\quad {\rm on}\quad  \C e\oplus B^e(0)\oplus B^e(1/2),\qquad -1\quad {\rm on}\quad B^e(1/16).\label{taue}
\end{equation*}
\end{lemma}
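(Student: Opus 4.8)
The plan is to analyze $B=V_2$ through the structure of $V$ as a module over the Virasoro sub-VOA $\vir(e):=\langle e\rangle\cong\ising$ generated by the Ising vector, and to read the eigenvalues of the adjoint operator $e\cdot(-)=e_{(1)}=L^e(0)$ off the conformal weights occurring in $V_2$. Here I write $L^e(n)=e_{(n+1)}$ for the Virasoro modes attached to $e$, so that $e\cdot v=L^e(0)v$ is exactly the grading operator of $\vir(e)$. First I would invoke the rationality of $\ising$: its only irreducible modules are $\ising$, $L(\tfr{1}{2},\tfr{1}{2})$, $L(\tfr{1}{2},\tfr{1}{16})$, with lowest weights $0,\tfr{1}{2},\tfr{1}{16}$, on each of which $L^e(0)$ has eigenvalues in $h+\Z_{\ge 0}$ with one-dimensional lowest-weight space. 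Since $V$ is an $\N$-gradable $\vir(e)$-module, rationality yields a decomposition $V=\bigoplus_j W_j$ into irreducible submodules $W_j\cong L(\tfr{1}{2},h_j)$ with $h_j\in\{0,\tfr{1}{2},\tfr{1}{16}\}$. The structural fact I would single out is that $\ising$ has \emph{no} weight-one vector (its graded dimensions at levels $0,1,2$ are $1,0,1$), its level-two space being precisely $\C e=\C\,L^e(-2)\vac$.

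The key step is a level count exploiting the integral grading of $V$ together with $V_0=\C\vac$ and $V_1=0$. Take $0\neq v\in V_2$ lying in a single summand $W_j\cong L(\tfr{1}{2},h_j)$ at internal level $\ell\ge 0$, so that $L^e(0)v=h_j+\ell$ while the lowest-weight vector of $W_j$ lies in $V_{2-\ell}$. Because $V_n=0$ for $n<0$ and $V_1=0$, only $\ell\in\{0,2\}$ can occur: $\ell=1$ would push the lowest-weight vector into $V_1=0$, and $\ell\ge 3$ into a negatively graded space. For $\ell=0$ the eigenvalue is $h_j\in\{0,\tfr{1}{2},\tfr{1}{16}\}$, producing $B^e(0)$, $B^e(1/2)$, $B^e(1/16)$; for $\ell=2$ the lowest-weight vector lies in $V_0=\C\vac$, forcing $h_j=0$ and $W_j=\vir(e)\vac$, whose level-two space is exactly $\C e$. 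Hence the only eigenvalue above the lowest weights is $2$, attained solely on $\C e$, and one gets $B=\C e\oplus B^e(0)\oplus B^e(1/2)\oplus B^e(1/16)$. I would stress that the intermediate values $1,\tfr{3}{2},\tfr{17}{16}$ need no separate exclusion: the gap $V_1=0$ removes them automatically.

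For the action of $\tau_e$ I would appeal to Miyamoto's construction. Grouping summands by isotype gives $V=V^{[0]}\oplus V^{[1/2]}\oplus V^{[1/16]}$, where $V^{[h]}$ is the sum of all $W_j\cong L(\tfr{1}{2},h)$, and the fusion rules for $\ising$-modules show that $\tau_e$, acting by $+1$ on $V^{[0]}\oplus V^{[1/2]}$ and by $-1$ on $V^{[1/16]}$, is a VOA automorphism. Under the decomposition above, $\C e$ and $B^e(0)$ lie in $V^{[0]}$, $B^e(1/2)$ in $V^{[1/2]}$, and $B^e(1/16)$ in $V^{[1/16]}$, which gives the stated signs: $\tau_e=1$ on $\C e\oplus B^e(0)\oplus B^e(1/2)$ and $\tau_e=-1$ on $B^e(1/16)$.

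I expect the only delicate point to be justifying complete reducibility of $V$ as a $\vir(e)$-module in the appropriate module category, so that the decomposition into the three isotypes is legitimate. Once that is in hand, the eigenvalue statement is a bookkeeping consequence of the $\N$-grading and the vanishing of $V_1$, and the sign of $\tau_e$ is immediate from its definition.
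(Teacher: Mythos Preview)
The paper does not supply its own proof of this lemma: it is quoted verbatim from \cite[Lemma~2.6]{LHY} and used as a black box. So there is no in-paper argument to compare against.

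Your argument is correct and is essentially the standard one behind the cited result. The decomposition $V=\bigoplus_j W_j$ into irreducible $\ising$-modules uses rationality of $\ising$ (as in \cite{DMZ}); the one point worth making explicit is why the lowest-weight vector of a summand meeting $V_2$ lands in $V_{2-\ell}$. This follows because $e\in V_2$ forces $L^e(n)=e_{(n+1)}$ to shift the $L(0)$-weight by $-n$, so the $\vir(e)$-decomposition can be chosen compatibly with the $L(0)$-grading, and a lowest-weight vector sits in a single $V_m$. With that granted, your level count using $V_1=0$ and $V_0=\C\vac$ is exactly right and pins down the eigenvalues $\{2,0,\tfr{1}{2},\tfr{1}{16}\}$ with the $2$-eigenspace equal to $\C e$. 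The identification of the $\tau_e$-signs via the isotypic decomposition $V=V^{[0]}\oplus V^{[1/2]}\oplus V^{[1/16]}$ is Miyamoto's definition from \cite{Mi}, so that part is immediate. The simplicity hypothesis on $V$ is not actually used in your argument for the eigenspace decomposition; in \cite{LHY} it guarantees a nondegenerate invariant form on $B$, which is needed for further structural statements there but not for what is quoted here.
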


In the proof of the main theorem, we need the following lemma:

\begin{lemma}\label{LLSY}{\rm \cite[Lemma 3.7]{LSY}} Let $V$ be a VOA of CFT type with
$V_1=0$. 
Suppose that $V$ has two Ising vectors $e$, $f$ and that $\tau_e=\mathrm{id}$ on $V$. 
Then $e$ is fixed by $\tau_f $, namely $e\in V^{\tau_f}$.
\end{lemma}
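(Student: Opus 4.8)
The plan is to reduce the assertion $e\in V^{\tau_f}$ to the vanishing of a single weight-$2$ vector, and then to force that vanishing out of the idempotent relations satisfied by $e$ and $f$ in the Griess algebra $B=V_2$. First I would record what must be proved. Applying Lemma~\ref{LHLY} to $f$, the involution $\tau_f$ acts as $+1$ on $\C f\oplus B^f(0)\oplus B^f(1/2)$ and as $-1$ on $B^f(1/16)$; so writing $e=a+u$ with $a=\tfr{1}{2}(e+\tau_f(e))$ and $u=\tfr{1}{2}(e-\tau_f(e))\in B^f(1/16)$, the claim $e\in V^{\tau_f}$ is exactly $u=0$. Since $\tau_f$ is orthogonal, $a\perp u$ and $\langle e,u\rangle=\langle u,u\rangle$; using that the invariant form on $B$ is positive definite (as in the simple, self-dual VOAs of CFT type under consideration), it suffices to prove $\langle u,u\rangle=0$.

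Next I would feed in the hypothesis $\tau_e=\id$. By Lemma~\ref{LHLY} applied to $e$, this forces $B^e(1/16)=0$, so $B=\C e\oplus B^e(0)\oplus B^e(1/2)$, and the eigenspaces multiply by the Ising-frame fusion rules $B^e(0)\cdot B^e(0)\subseteq B^e(0)$, $B^e(0)\cdot B^e(1/2)\subseteq B^e(1/2)$ and $B^e(1/2)\cdot B^e(1/2)\subseteq\C e\oplus B^e(0)$ (from the $L(1/2,0)$-fusion rules), with $e$ acting by its eigenvalue. I would then decompose $f=\beta e+f_0+f_{1/2}$ and $u=\alpha e+u_0+u_{1/2}$ accordingly, where $\beta=4\langle e,f\rangle$ and $\alpha=4\langle e,u\rangle=4\langle u,u\rangle$; the goal is now $\alpha=0$.

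Now I would generate relations. Since $e$ and $\tau_f(e)=a-u$ are both Ising vectors, hence idempotents ($e\cdot e=2e$), subtracting their expansions gives the exact identity $a\cdot u=u$, and therefore $u\cdot u=\alpha e-u_0-\tfr{1}{2}u_{1/2}$. Pairing this last identity with $e$ and with $u$, and using $\langle u,u\rangle=\alpha/4$, already yields
\[ \|u_{1/2}\|^2=\fr{\alpha}{2}(1-2\alpha),\qquad \|u_0\|^2=\fr{\alpha}{4}(3\alpha-1). \]
In parallel, projecting the idempotent relation $f\cdot f=2f$ and the eigenvector relation $f\cdot u=\tfr{1}{16}u$ onto $\C e$, $B^e(0)$ and $B^e(1/2)$ produces a system of quadratic equations in $\alpha,\beta$ and the inner products $\langle f_0,u_0\rangle$, $\langle f_{1/2},u_{1/2}\rangle$, $\langle f_0,f_0\rangle$, $\langle f_{1/2},f_{1/2}\rangle$. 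Combining these with the two positivity constraints above, which already confine a hypothetical nonzero $\alpha$ to $[1/3,1/2]$, I expect to force $\alpha=0$, whence $u=0$ and $e\in V^{\tau_f}$.

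The main obstacle is precisely closing this system. The difficulty is the cross terms $\langle f_{1/2},u_{1/2}\rangle$ and $\langle f_0,u_0\rangle$, which enter when $f\cdot u=\tfr{1}{16}u$ is projected onto the eigenspaces (they arise because $B^e(1/2)\cdot B^e(1/2)$ has a nonzero $\C e$-component) and which the $e$-eigenvalue bookkeeping does not control on its own. To eliminate them I would either invoke a Norton-type positivity inequality for the Griess algebra, or appeal to the classification of the subalgebra of $B$ generated by two Ising vectors: under $\tau_e=\id$ this subalgebra carries no $1/16$-part for $e$, which restricts it to the cases where $e$ and $f$ either coincide or commute, and in both of these $\tau_f(e)=e$ is immediate.
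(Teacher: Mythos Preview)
The paper does not give its own proof of this lemma; it is quoted verbatim from \cite[Lemma~3.7]{LSY} and used as a black box. So there is nothing in the paper to compare your argument against, and the only question is whether your proposal stands on its own.

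It does not, and you say so yourself. The setup is sound and the computations you actually carry out are correct: from $a\cdot u=u$ and $u\cdot u=\alpha e-u_0-\tfr{1}{2}u_{1/2}$ one does get $\|u_{1/2}\|^2=\tfr{\alpha}{2}(1-2\alpha)$ and $\|u_0\|^2=\tfr{\alpha}{4}(3\alpha-1)$, hence $\alpha\in\{0\}\cup[1/3,1/2]$ under positive definiteness. But that is where the argument stops. The relations coming from $f\cdot f=2f$ and $f\cdot u=\tfr{1}{16}u$ introduce the cross terms $\langle f_0,u_0\rangle$ and $\langle f_{1/2},u_{1/2}\rangle$, and the identities you have written down do not eliminate them; indeed, several of the obvious pairings (for instance $\langle f,u\cdot u\rangle$ computed two ways) collapse to tautologies. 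Your two proposed escapes are not proofs: ``a Norton-type positivity inequality'' is not specified, and ``appeal to the classification of the subalgebra generated by two Ising vectors'' is Sakuma's theorem \cite{Sa}, which is far deeper than the lemma at hand and would make the whole exercise circular in spirit. As written, the proposal is a plausible outline that stalls precisely at the point that needs an idea.

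Two smaller issues: you invoke Lemma~\ref{LHLY}, which in this paper is stated for \emph{simple} $V$, and you assume the invariant form on $B$ is positive definite; neither hypothesis appears in Lemma~\ref{LLSY} as stated. Both hold in the intended application to $V_L^+$, but if you mean to prove the lemma in the generality stated you must either add these assumptions or justify why the conclusion of Lemma~\ref{LHLY} and positivity are available without them.
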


Let $L$ be an even lattice of rank $n$ without roots, that is, $L(2)=\{v\in L\mid \langle v,v\rangle=2\}=\emptyset$.
Then $(V_L^+)_1=0$, and we can consider the Griess algebra $B=(V_L^+)_2$ of $V_L^+$.
Let $\{h_i\mid 1\le i\le n\}$ be an orthonormal basis of $H=\C\otimes_\Z L=\langle L\rangle_\C$.
Set $L(4)=\{v\in L\mid \langle v,v\rangle=4\}$.
For $1\le i\le j\le n$ and $\alpha\in L(4)$, set $h_{ij}=h_i(-1)h_j(-1)\1$ and $x_\alpha=e^\alpha+e^{-\alpha}=e^\alpha+\theta(e^\alpha)$.
Note that $x_\alpha=x_{-\alpha}$.
\begin{lemma}\label{Lprod}{\rm \cite[Section 8.9]{FLM}} 
\begin{enumerate}[{\rm (1)}]
\item The set $$\{h_{ij}, x_\alpha\mid 1\le i\le j\le n,\ \{\pm\alpha\}\subset L(4)\}$$
is a basis of $B$.
\item The products of the basis of $B$ given in (1) are the following:
\begin{eqnarray*}
{h_{ij}}\cdot h_{kl}&=&\delta_{ik}h_{jl}+\delta_{il}h_{jk}+\delta_{jk}h_{il}+\delta_{jl}h_{ik},\\
{h_{ij}}\cdot x_\alpha&=&\langle h_i,\alpha\rangle\langle h_j,\alpha\rangle x_\alpha,\\
{x_\alpha}\cdot x_\beta&=&
\begin{cases}\varepsilon(\alpha,\beta)x_{\alpha\pm\beta} &{\rm if}\ \langle\alpha,\beta\rangle=\mp2,\\ \alpha(-1)^2\1& {\rm if}\ \alpha=\pm\beta,\\ 0 & {\rm otherwise}. \end{cases}
\end{eqnarray*}
\end{enumerate}
\end{lemma}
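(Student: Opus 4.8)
The plan is to compute everything directly from the realization $V_L=M_H(1)\otimes\C\{L\}$ using the standard lattice vertex operators, since the Griess product is nothing but the first product $a\cdot b=a_{(1)}b=\res_z z\,Y(a,z)b$. I would begin with (1) by a weight count: a monomial $h_{i_1}(-n_1)\cdots h_{i_k}(-n_k)\otimes e^\alpha$ has weight $\sum_r n_r+\langle\alpha,\alpha\rangle/2$, so the weight $2$ subspace of $V_L$ is spanned by the $h_i(-2)\1$, the $h_{ij}$, the vectors $h_i(-1)\otimes e^\gamma$ with $\gamma\in L(2)$, and the $e^\alpha$ with $\alpha\in L(4)$. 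Because $L$ has no roots, $L(2)=\emptyset$ and the third family is empty. Applying $\theta$, which acts by $-1$ on each $h_i$ and sends $e^\alpha\mapsto e^{-\alpha}$, shows that $h_i(-2)\1$ and $e^\alpha-e^{-\alpha}$ are anti-invariant while $h_{ij}$ and $x_\alpha=e^\alpha+e^{-\alpha}$ are invariant; the latter are visibly linearly independent, which gives the claimed basis of $B=(V_L^+)_2$.

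For (2) I would handle the three products in turn. For $h_{ij}\cdot h_{kl}$, using $Y(h_{ij},z)=\,:\!h_i(z)h_j(z)\!:$ one reads off $(h_{ij})_{(1)}=\sum_{m\in\Z}:\!h_i(m)h_j(-m)\!:$, and applying this to $h_k(-1)h_l(-1)\1$ only the modes $m=\pm1$ survive (all others either annihilate the vector or vanish by the zero-momentum condition $h(0)\1=0$); the relation $[h_i(m),h_j(n)]=m\delta_{ij}\delta_{m+n,0}$ then produces exactly the four Kronecker terms. For $h_{ij}\cdot x_\alpha$ the same operator acting on $e^\alpha$ leaves only the $m=0$ term $h_i(0)h_j(0)$, which multiplies $e^\alpha$ by the momentum eigenvalues $\langle h_i,\alpha\rangle\langle h_j,\alpha\rangle$; summing the $e^{\pm\alpha}$ contributions gives the stated eigenvalue formula since both signs yield the same scalar.

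The heart of the argument, and the step I expect to be most delicate, is $x_\alpha\cdot x_\beta$. I would expand $Y(e^\alpha,z)e^\beta=\varepsilon(\alpha,\beta)\,z^{\langle\alpha,\beta\rangle}\exp\!\big(\sum_{n\geq1}\tfrac{\alpha(-n)}{n}z^n\big)e^{\alpha+\beta}$ and extract the coefficient of $z^{-2}$; a nonzero contribution forces $\langle\alpha,\beta\rangle\leq-2$, and since $\alpha,\beta\in L(4)$ the Cauchy--Schwarz bound leaves only $\langle\alpha,\beta\rangle\in\{-4,-3,-2\}$. This is exactly where the no-roots hypothesis enters: the value $-3$ would make $\alpha+\beta$ a norm $2$ vector and is therefore excluded, while $-4$ forces $\beta=-\alpha$, so the computation collapses to two clean cases. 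When $\langle\alpha,\beta\rangle=-2$ the leading term is $\varepsilon(\alpha,\beta)e^{\alpha+\beta}$, and summing over the four products $(e^{\pm\alpha})_{(1)}e^{\pm\beta}$ (the cross terms die because $\langle\alpha,-\beta\rangle=2$) gives $\varepsilon(\alpha,\beta)x_{\alpha+\beta}$; the case $\langle\alpha,\beta\rangle=+2$ follows by replacing $\beta$ with $-\beta$ and using $x_\beta=x_{-\beta}$. When $\beta=-\alpha$ the coefficient of $z^{-2}$ is $\big(\tfrac12\alpha(-2)+\tfrac12\alpha(-1)^2\big)\1$, and the decisive observation is that the $\alpha(-2)$ contributions from $(e^\alpha)_{(1)}e^{-\alpha}$ and $(e^{-\alpha})_{(1)}e^{\alpha}$ are odd in $\alpha$ and cancel, leaving $\alpha(-1)^2\1$. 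All remaining inner products lie in $\{-1,0,1\}$, which forces the $z^{-2}$ coefficient to vanish and yields the product $0$. The only genuine bookkeeping is the cocycle normalization $\varepsilon(-\alpha,-\beta)=\varepsilon(\alpha,\beta)$ and $\varepsilon(\alpha,-\alpha)=1$, both immediate from bilinearity together with $\varepsilon(\alpha,\alpha)=\kappa^{\langle\alpha,\alpha\rangle/2}=1$ for $\alpha\in L(4)$.
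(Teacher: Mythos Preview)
Your argument is correct and is precisely the standard computation the paper defers to by citing \cite[Section~8.9]{FLM}; the paper gives no independent proof of this lemma, so there is nothing to compare beyond noting that your weight count for (1) and your mode-by-mode extraction of $a_{(1)}b$ for (2), including the use of the no-roots hypothesis to exclude $\langle\alpha,\beta\rangle=\pm3$ and the cocycle identities $\varepsilon(-\alpha,-\beta)=\varepsilon(\alpha,\beta)$, $\varepsilon(\alpha,-\alpha)=1$, reproduce exactly the FLM calculation.
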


Let $\al\in L(4)$. 
Then the elements $\omega^+(\al)$ and $\omega^-(\al)$ of $V_L^+$ defined by
\begin{equation}
\omega^\pm(\al)=\frac{1}{16} \al(-1)^2\cdot \mathbf{1}\pm
\frac{1}4 x_\alpha\label{Def:Ising1}
\end{equation}
are Ising vectors (\cite[Theorem 6.3]{DMZ}).
The following lemma is easy:

\begin{lemma}\label{prp:3.2}
The automorphisms $\tau_{\omega^\pm(\al)}$ of $V_L^+$ act by 
\[
u\otimes x_\be\mapsto (-1)^ {\la \al, \beta \ra} u\otimes x_\be
\quad \text{ for } u\in M_H(1)\text { and }\be \in L.
\]
\end{lemma}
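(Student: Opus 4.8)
The plan is to determine, $\Z\al$-coset by $\Z\al$-coset, the $\langle\om^+(\al)\rangle$-module structure carried by $V_L$, and then read off the $\tau$-action from Miyamoto's definition. Write $\om^\pm=\om^\pm(\al)$ and note $\om^++\om^-=\tfr18\al(-1)^2\vac=:\om_\al$, which is the conformal vector of the rank-one sublattice VOA $V_{\Z\al}\subseteq V_L$ (recall $\la\al,\al\ra=4$); by \cite{DMZ} the vectors $\om^+$ and $\om^-$ generate mutually commuting copies $\langle\om^+\rangle\cong\langle\om^-\rangle\cong\ising$, so $\langle\om^+\rangle\otimes\langle\om^-\rangle$ is a subVOA of $V_{\Z\al}$. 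Decompose $V_L=\bigoplus_{\mu\in L/\Z\al}V_L^{(\mu)}$, where $V_L^{(\mu)}=M_H(1)\otimes\Span_\C\{e^{\be+n\al}\mid n\in\Z\}$ for a chosen $\be\in\mu$. Each $V_L^{(\mu)}$ is stable under the vertex operators coming from $V_{\Z\al}$, hence is a module over $\langle\om^+\rangle\otimes\langle\om^-\rangle$. Moreover, by \cite[Theorem 4.2]{Mi}, $\tau_{\om^+}$ — which is defined on all of $V_L$, since $\om^+$ is an Ising vector of $V_L$, and restricts to the $\tau$-involution of $V_L^+$ — acts as $-1$ on the sum of the $\ising$-submodules of $V_L$ isomorphic to $L(\tfr12,\tfr1{16})$ and as $+1$ on the rest, and likewise for $\tau_{\om^-}$. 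So it suffices to detect, for each $\mu$, whether $V_L^{(\mu)}$ contains a copy of $L(\tfr12,\tfr1{16})$ as a $\langle\om^\pm\rangle$-module.

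I would do this using $L_0$-eigenvalues modulo $\Z$. On $V_L^{(\mu)}$ the operator $L_0^{\om^+}+L_0^{\om^-}=(\om_\al)_{(1)}=\tfr18\bigl(\al(0)^2+2\sum_{n>0}\al(-n)\al(n)\bigr)$ is diagonalized by the standard $\al$-oscillator monomials over the $e^{\be+n\al}$; since $\la\al,\be+n\al\ra^2\equiv\la\al,\be\ra^2\pmod 8$ and $L$ is even, a short computation shows that every one of its eigenvalues on $V_L^{(\mu)}$ is $\equiv\tfr18\la\al,\be\ra^2\pmod\Z$. On the other hand, $\langle\om^+\rangle\otimes\langle\om^-\rangle\cong\ising\otimes\ising$ is rational, so $V_L^{(\mu)}$ decomposes over it into irreducible summands $L(\tfr12,h_1)\otimes L(\tfr12,h_2)$ with $h_1,h_2\in\{0,\tfr12,\tfr1{16}\}$, and on such a summand $L_0^{\om^+}+L_0^{\om^-}$ has all eigenvalues $\equiv h_1+h_2\pmod\Z$. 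Comparing the two, every summand of $V_L^{(\mu)}$ has $h_1+h_2\equiv\tfr18\la\al,\be\ra^2\pmod\Z$; this residue is $\tfr18$ when $\la\al,\be\ra$ is odd, which forces $h_1=h_2=\tfr1{16}$, and it is $0$ or $\tfr12$ when $\la\al,\be\ra$ is even, which forces $h_1,h_2\in\{0,\tfr12\}$. Hence over $\langle\om^+\rangle$ (and over $\langle\om^-\rangle$), $V_L^{(\mu)}$ is an isotypic sum of copies of $L(\tfr12,\tfr1{16})$ when $\la\al,\be\ra$ is odd, and contains no copy of $L(\tfr12,\tfr1{16})$ when $\la\al,\be\ra$ is even.

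Combined with the description of $\tau_{\om^\pm}$ above, this shows $\tau_{\om^\pm(\al)}$ acts on $V_L^{(\mu)}$ by the scalar $(-1)^{\la\al,\be\ra}$. Since $M_H(1)\otimes\C x_\be\subseteq V_L^{(\mu)}\oplus V_L^{(-\mu)}$ for $\mu=\be+\Z\al$, and $\la\al,-\be\ra$ has the same parity as $\la\al,\be\ra$, we get $\tau_{\om^\pm(\al)}(u\otimes x_\be)=(-1)^{\la\al,\be\ra}\,u\otimes x_\be$ for all $u\in M_H(1)$, and restricting to $V_L^+$ gives the lemma. The one point to watch is that $\om^\pm(\al)$ does \emph{not} preserve the individual coset pieces $M_H(1)\otimes\C e^\be$ — the term $\tfr14x_\al$ mixes it with $M_H(1)\otimes\C e^{\be\pm\al}$ — so one cannot read the answer off an eigenvalue on $(V_L^+)_2$, but must work with the full $\Z\al$-coset space $V_L^{(\mu)}=\bigoplus_{n\in\Z}M_H(1)\otimes\C e^{\be+n\al}$ and recover its $\langle\om^\pm(\al)\rangle$-module type from the $L_0$-spectrum modulo $\Z$; everything else is routine.
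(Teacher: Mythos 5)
Your argument is correct. The paper itself gives no proof of this lemma (it is introduced with ``The following lemma is easy''), the intended justification being exactly the standard mechanism you spell out: $\om^+(\al)+\om^-(\al)$ is the conformal vector of $V_{\Z\al}$, $\langle\om^+(\al)\rangle\otimes\langle\om^-(\al)\rangle\cong \ising\otimes \ising$ by \cite{DMZ}, and Miyamoto's $\tau$ is $-1$ precisely on the $L(\tfr12,\tfr1{16})$-isotypic part. So you are not taking a genuinely different route; you are filling in the omitted details, and you do so correctly: the $\Z\al$-coset pieces $V_L^{(\mu)}$ are indeed $V_{\Z\al}$-stable, the eigenvalues of $(\om^+(\al)+\om^-(\al))_{(1)}$ on $V_L^{(\mu)}$ are all congruent to $\tfr18\la\al,\be\ra^2$ modulo $\Z$ (the oscillators orthogonal to $\al$ contribute $0$ and $(\la\al,\be\ra+4n)^2\equiv\la\al,\be\ra^2\pmod 8$), and the comparison with $h_1+h_2\in\{0,\tfr12,1,\tfr1{16}+h,\tfr18\}$ pins down the module types exactly as you say. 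Two small points you implicitly use and could state: complete reducibility of $V_L^{(\mu)}$ requires viewing it as an admissible ($\N$-gradable, via the total $L_0$-grading) module over the rational VOA $\ising\otimes\ising$, with irreducibles of the tensor product being tensor products of irreducibles; and since $\theta$ fixes $\om^\pm(\al)$, it commutes with their modes, so the isotypic decomposition of $V_L$ refines to $V_L^+$ and the involution you construct on $V_L$ really does restrict to the $\tau$-involution of $V_L^+$ (note that $u\otimes x_\be$ lies in $V_L^+$ only for $\theta$-fixed $u$, so proving the formula on all of $V_L$, as you do, is the right move). Your closing remark that one cannot read the answer off eigenvalues on single coset summands $M_H(1)\otimes\C e^\be$ is well taken and consistent with how the result is used later in the paper.
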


In general, the following holds:

\begin{proposition}{\rm \cite[Lemma 5.5]{LS}}\label{LS56} Let $L$ be an even lattice without roots and $e$ an Ising vector in $V_L^+$.
Then $\tau_e\in O(\hat{L})/\langle\theta\rangle$.
\end{proposition}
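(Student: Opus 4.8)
The plan is to realise $\tau_e$ as the restriction to $V_L^+$ of an automorphism of the lattice VOA $V_L$, and then to read off the conclusion from the known structure of $\Aut(V_L)$. Recall that $V_L=V_L^+\oplus V_L^-$, where $V_L^-$ is the $(-1)$-eigenspace of $\theta$; it is an irreducible $V_L^+$-module and this decomposition exhibits $V_L$ as a simple current extension of $V_L^+$ of order $2$. The key step is to show that $\tau_e$ fixes the isomorphism class $[V_L^-]$. Since $\tau_e$ permutes the (finitely many) irreducible $V_L^+$-modules and preserves conformal weights, graded dimensions and the fusion rules, once $[V_L^-]$ is known to be a fixed point one can extend $\tau_e$ to an automorphism $\tilde g$ of $V_L=V_L^+\oplus V_L^-$, the two possible extensions differing by $\theta$.

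Granting this, the rest is short. By construction $\tilde g$ stabilises both summands $V_L^+$ and $V_L^-$, hence commutes with $\theta$ (which is $+1$ on the former and $-1$ on the latter). By the description of $\Aut(V_L)$ in \cite{FLM}, this group is generated by the normal abelian subgroup $N=\{\exp(2\pi\sqrt{-1}\,a_{(0)})\mid a\in H\}\cong\hom(L,\C^\times)$ of inner automorphisms together with $O(\hat L)$, and $N\cap O(\hat L)=\hom(L,\Z/2\Z)$, the $2$-torsion of $N$. Write $\tilde g=sf$ with $s\in N$ and $f\in O(\hat L)$. Since $\bar\theta=-1$ we have $\theta s\theta^{-1}=s^{-1}$, while $\theta$ is central in $O(\hat L)$; therefore $\theta\tilde g\theta^{-1}=s^{-1}f$, and $\theta\tilde g\theta^{-1}=\tilde g$ forces $s^2=\mathrm{id}$. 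Thus $s\in\hom(L,\Z/2\Z)\subseteq O(\hat L)$, so $\tilde g=sf\in O(\hat L)$ and $\tau_e=\tilde g|_{V_L^+}\in O(\hat L)/\langle\theta\rangle$.

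The main obstacle is the first step, that $\tau_e$ fixes $[V_L^-]$; this is where the hypothesis that $L$ has no roots and the classification of irreducible $V_L^+$-modules enter. Concretely, one must exclude that $\tau_e$ carries $V_L^-$ to a distinct irreducible module $M$. As $\tau_e$ preserves conformal weights, graded dimensions and fusion, such an $M$ would again be a self-dual order-$2$ simple current for which $W=V_L^+\oplus M$ is a simple VOA of CFT type and central charge $\mathrm{rk}\, L$; one then compares the graded characters of the finitely many such candidate extensions $W$ with that of $V_L$, root-freeness of $L$ being exactly what keeps these characters distinct and so rules out $M\ne V_L^-$. (Alternatively one can use the explicit description of $\tau_e$ on $(V_L^+)_2$ from Lemma~\ref{LHLY}---that $\tau_e$ is $+1$ on $\C e\oplus B^e(0)\oplus B^e(1/2)$ and $-1$ on $B^e(1/16)$---propagated to higher weight, to pin down its action on the module category directly.) Once $\tau_e$ is known to fix $[V_L^-]$, the descent through $\Aut(V_L)$ carried out above finishes the proof.
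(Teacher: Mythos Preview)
The paper does not prove this proposition; it is quoted from \cite{LS} without argument, so there is no in-paper proof to compare against. Your overall strategy---lift $\tau_e$ to an automorphism of $V_L$, observe it commutes with $\theta$, and then use the structure theorem $\Aut(V_L)=N\cdot O(\hat L)$ to force the $N$-component into the $2$-torsion---is essentially the right one and matches the approach in \cite{LS}.

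However, you have made the crucial first step much harder than it is, and the argument you give for it is not a proof. You frame the problem as extending $\tau_e$ from $V_L^+$ to $V_L$ by showing that $\tau_e$ fixes the isomorphism class $[V_L^-]$, and you propose to do this by comparing graded characters of candidate simple-current extensions. That comparison is not carried out, and the assertion that ``root-freeness of $L$ is exactly what keeps these characters distinct'' is neither obvious nor true in general: distinct order-$2$ simple currents of $V_L^+$ (for instance those coming from cosets in $L^*/L$) can have coinciding graded dimensions. The parenthetical alternative, ``propagate Lemma~\ref{LHLY} to higher weight,'' is likewise only a gesture.

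The point you are missing is that no extension argument is needed at all. Since $e\in V_L^+\subset V_L$, the element $e$ is already an Ising vector in $V_L$, and Miyamoto's construction gives $\tau_e\in\Aut(V_L)$ directly; its restriction to $V_L^+$ is the $\tau_e$ you started with. Because $\theta(e)=e$, the involution $\theta$ intertwines the $L(\tfr{1}{2},0)$-module decomposition of $V_L$, hence commutes with $\tau_e$. This immediately gives the $\tilde g$ you wanted, with $\tilde g\theta=\theta\tilde g$, and your second and third paragraphs then go through verbatim.

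One minor correction: the decomposition $\Aut(V_L)=N\cdot O(\hat L)$ is not in \cite{FLM}; it is due to Dong and Nagatomo.
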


We note that the main theorem was proved if $L/\sqrt2$ is even as follows:

\begin{proposition}\label{TLSY}{\rm \cite[Theorem 4.6]{LSY}} Let $L$ be an even lattice and $e$ an Ising vector in $V_{L}^+$.
Assume that the lattice $L/\sqrt2$ is even.
Then there is a sublattice $U$ of $L$ isomorphic to $\sqrt2A_1$ or $\sqrt2E_8$ such that $e\in V_{U}^+$.
\end{proposition}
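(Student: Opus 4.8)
Since $L/\sqrt2$ is even, every inner product $\langle\alpha,\beta\rangle$ with $\alpha,\beta\in L$ is even, so $L=\sqrt2 K$ for an even lattice $K$ and the map $\alpha\mapsto\alpha/\sqrt2$ identifies $L(4)$ with the roots of $K$; in particular any two elements of $L(4)$ have inner product in $\{0,\pm2,\pm4\}$, so the rescaled support is a simply-laced root system. Write the Ising vector in the basis of Lemma~\ref{Lprod} as $e=v+\sum_{\alpha}c_\alpha x_\alpha$, where $v\in\Span\{h_{ij}\}$ corresponds to a symmetric operator $V$ on $H$ with $v\cdot x_\beta=\langle\beta,V\beta\rangle x_\beta$, and the sum runs over pairs $\{\pm\alpha\}$ in the support $R=L(4;e)=\{\alpha\mid c_\alpha\ne0\}$. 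The defining relations $e_{(1)}e=2e$ and $e_{(3)}e=\tfrac14\vac$ translate, via Lemma~\ref{Lprod}, into three families: (i) the operator equation $4V^2-2V+S=0$ with $S=\sum_{\{\pm\alpha\}}c_\alpha^2\,\alpha\alpha^{\mathsf T}$ (the $h_{ij}$-components); (ii) for each $\gamma\in R$ a root equation $2\langle\gamma,V\gamma\rangle c_\gamma+\sum_{\alpha+\beta=\gamma,\ \langle\alpha,\beta\rangle=-2}\varepsilon(\alpha,\beta)c_\alpha c_\beta=2c_\gamma$ (the $x_\gamma$-components); and (iii) the norm identity $\tr V^2+\sum_{\{\pm\alpha\}}c_\alpha^2=\tfrac18$. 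Taking the trace of (i) and combining with (iii) forces $\tr V=\tfrac14$, which already rules out $R=\emptyset$ (then $V^2=\tfrac12 V$ would make $\tr V\in\tfrac12\Z_{\ge0}$).

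\textbf{Reduction to a root lattice.} First I would show $V$ is supported on $\Span R$. Since $\ker S=(\Span R)^\perp$, one checks $(\Span R)^\perp$ is $V$-invariant, and there $4V^2-2V=0$, so $V$ restricts to $\tfrac12\Pi$ for a projection $\Pi$. Such a summand of $v$ is the conformal vector of a Heisenberg subVOA of central charge $\operatorname{rank}\Pi$, which cannot be an orthogonal idempotent summand of a central-charge-$\tfrac12$ vector; hence $\Pi=0$. Thus $e\in V_U^+$ with $U=\langle R\rangle=\sqrt2 M$, where $M$ is the root lattice generated by $R/\sqrt2$. Decomposing $M=\bigoplus_i M_i$ into indecomposable (ADE) summands, $S$ becomes block diagonal, and the norm identity (iii) is exactly what forces the cross-blocks of $V$ and then the off-component coefficients to vanish—equivalently, a genuine split of $e$ into commuting orthogonal idempotents would violate $c=\tfrac12$. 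So $e$ is supported on a single indecomposable summand, and I may assume $M\in\{A_n,D_n,E_6,E_7,E_8\}$ with $R$ generating $M$.

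\textbf{The numerical classification.} Because $L/\sqrt2$ is even, Lemma~\ref{prp:3.2} gives $\tau_{\omega^\pm(\alpha)}=\mathrm{id}$ for every $\alpha\in L(4)$; Lemma~\ref{LLSY} then shows $\tau_e$ fixes each $\omega^\pm(\alpha)$, hence every $x_\alpha$ and every $\alpha(-1)^2\vac$, so by Lemma~\ref{LHLY} the Griess algebra of $V_U^+$ lies in $\C e\oplus B^e(0)\oplus B^e(1/2)$, i.e. $B^e(1/16)=0$. Exploiting this vanishing together with (i)--(iii), I would show the solution is \emph{uniform}: $|c_\alpha|$ is constant on $R$ and $V=sP_M$ is scalar on $\Span M$. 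Then $\tr V=\tfrac14$ gives $s=\tfrac1{4r}$ and $\langle\gamma,V\gamma\rangle=\tfrac1r$ $(r=\operatorname{rank}M)$; the root-system identity $\sum_{\text{roots }\bar\alpha}\langle\bar\alpha,\bar\gamma\rangle^2=8m/r$ (with $2m$ the number of roots) yields $k=4(m-r)/r$ for the number $k$ of roots at inner product $1$ with a fixed root; and substituting these into the root and norm equations collapses everything to the single Diophantine condition $4mr(r-1)^2=(m-r)^2(2r-1)$. Running this across the ADE list, it holds precisely for $A_1$ $(r=1,\ m=1)$ and $E_8$ $(r=8,\ m=120)$ and fails for $A_{n\ge2},D_n,E_6,E_7$; the two surviving cases are realised by the vectors of \cite{DMZ} and \cite{DLMN,Gr}. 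Hence $U\cong\sqrt2 A_1$ or $\sqrt2 E_8$.

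\textbf{Main obstacle.} The genuinely hard step is the uniformity claim, entangled with control of the $2$-cocycle signs $\varepsilon(\alpha,\beta)$ in the root equations. The obstruction is that the cubic ``structure-constant'' sum $\sum\varepsilon(\alpha,\beta)c_\alpha c_\beta c_{\alpha+\beta}$ over root triangles is not sign-definite, so the clean trace bookkeeping leading to the Diophantine identity is only available once the coefficients are known to be constant (up to the unavoidable $\varepsilon$-signs) on the root component. I expect the vanishing $B^e(1/16)=0$ and the detailed module structure of $L(\tfrac12,0)$ to do the real work here: they pin down how each $x_\alpha$ sits in the $0$- and $\tfrac12$-eigenspaces of $e$, which is what should force the coefficient pattern. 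Once uniformity is secured, the identity $4mr(r-1)^2=(m-r)^2(2r-1)$ selects $A_1$ and $E_8$ immediately.
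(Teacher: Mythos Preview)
This proposition is not proved in the present paper; it is quoted from \cite[Theorem~4.6]{LSY} and used as an input to the proof of Theorem~\ref{MT}. So there is no in-paper argument to compare your attempt against. The proof in \cite{LSY} does not proceed by directly solving the idempotent equations in the Griess algebra as you do; it exploits the extra structure available when $L=\sqrt2K$ with $K$ even --- in particular the Virasoro frame built from the $\omega^\pm(\alpha)$ (all of whose $\tau$-involutions are trivial on $V_L^+$ by Lemma~\ref{prp:3.2}) and the ensuing code-VOA description --- together with the classification of Ising vectors in code VOAs from \cite{Lam}. That route sidesteps the cocycle bookkeeping entirely.

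Your direct Griess-algebra approach is a plausible alternative, but it has a genuine gap, which you yourself identify: the ``uniformity'' claim that $|c_\alpha|$ is constant on $R$ and $V=sP_M$ is scalar on $\Span R$ is asserted, not proved. The observation $B^e(1/16)=0$ is correct and relevant, but it does not by itself force uniformity; you only say you ``expect'' the module theory of $L(\tfrac12,0)$ to do the work. There is a second, related gap: your Diophantine count takes $2m$ to be the number of roots of the indecomposable component $M$, which presupposes that $R/\sqrt2$ is the \emph{entire} root system of $M$, whereas a priori $R$ is only a generating subset. Both issues sit exactly where the signs $\varepsilon(\alpha,\beta)$ make naive symmetry arguments fail. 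Until uniformity and $R=\text{full root system}$ are established, the identity $4mr(r-1)^2=(m-r)^2(2r-1)$ is a consistency check on the known $A_1$ and $E_8$ solutions rather than a proof that no other solutions exist.
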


\section{Classification of Ising vectors in $V_{L}^+$}

Let $L$ be an even lattice of rank $n$ without roots and $e$ an Ising vector in $V_{L}^+$.
Then by Lemma \ref{Lprod} (1)
\begin{equation}
e=\sum_{i\le j} c^e_{ij}h_{ij}+\sum_{\{\pm\alpha\}\subset L(4)} d^e_{\{\pm\alpha\}}x_\alpha,\label{Eqe}
\end{equation}
where $c^e_{ij},d^e_{\{\pm\alpha\}}\in\C$.
Set $L(4;e)=\{\alpha\in L(4)\mid d^e_{\{\pm\alpha\}}\neq0\}$, ${H}_1=\langle L(4;e)\rangle_\C$ and $H_2=H_1^\perp$ in $H$.
Note that if $\alpha\in L(4;e)$ then $-\alpha\in L(4;e)$.
Without loss of generality, we may assume that $h_i\in {H}_1$ if $1\le i\le \dim H_1$.
Then $H_2={\rm Span}_\C\{h_j\mid \dim H_1+1\le j\le n\}$.

By Proposition \ref{LS56}, $\tau_e\in O(\hat{L})/\langle\theta\rangle$.
Since $e\in V_L$, we regard $\tau_e$ as an automorphism of $V_L$.
Then $\tau_e\in O(\hat{L})$, and set $g=\bar{\tau_e}\in\Aut(L)$.
Since $\tau_e$ is of order $1$ or $2$, so is $g$.
The following is the key lemma in this article:

\begin{lemma}\label{L4e} Let $\beta\in L(4;e)$.
Then $g(\beta)\in\{\pm\beta\}$.
\end{lemma}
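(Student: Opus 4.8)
The plan is to extract information about $g$ from the fact that $\tau_e$ preserves the eigenspace decomposition of the Griess algebra $B=(V_L^+)_2$ under the adjoint action of $e$. Fix $\beta\in L(4;e)$. The key observation is that $\tau_e$ acts on $B$ by $+1$ on $\C e\oplus B^e(0)\oplus B^e(1/2)$ and by $-1$ on $B^e(1/16)$ (Lemma \ref{LHLY}), so these two pieces are intrinsic to $e$; meanwhile, since $\tau_e\in O(\hat L)$ acts on $V_L$ by formula (\ref{Eq:fa}), it sends the basis vector $x_\beta$ of $B$ into $\pm x_{g(\beta)}$ (using (\ref{Eq:f})). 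First I would locate $x_\beta$, and more usefully the component $d^e_{\{\pm\beta\}}x_\beta$ of $e$ itself, inside the eigenspace picture. Because $e$ is the conformal vector of the Virasoro subVOA it generates, $e\cdot e=2e$, i.e.\ $e\in\C e$ is in the $+1$-eigenspace of $\tau_e$; hence $\tau_e(e)=e$. Writing out $\tau_e(e)$ using (\ref{Eqe}) and the fact that $\tau_e$ fixes every $h_{ij}$-type term up to the linear action of $g$ on $H$, and permutes the $x_\alpha$ with signs according to $\alpha\mapsto g(\alpha)$, the equality $\tau_e(e)=e$ forces the set $L(4;e)$ to be $g$-invariant: $g$ permutes $\{\{\pm\alpha\}\mid\alpha\in L(4;e)\}$, and the coefficient $d^e_{\{\pm\alpha\}}$ is constant along each $g$-orbit (up to the sign coming from (\ref{Eq:f})).

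That alone does not pin down $g(\beta)\in\{\pm\beta\}$; I need a finer invariant that separates the orbits. The natural one is the adjoint action of $e$ itself, or rather of the "diagonal" part $\omega(\beta):=\tfrac1{16}\beta(-1)^2\cdot\vacuum$ hidden in the structure. More concretely, I would use that $g$ preserves $H_1=\langle L(4;e)\rangle_\C$ and $H_2=H_1^\perp$ (this follows from $g$-invariance of $L(4;e)$ just established), so $g$ acts as an isometry of $H_1$ permuting the lines $\C\beta$ for $\beta\in L(4;e)$. Now the crucial computation: evaluate $e\cdot x_\beta$ using Lemma \ref{Lprod}(2). The term $h_{ij}\cdot x_\beta=\langle h_i,\beta\rangle\langle h_j,\beta\rangle x_\beta$ contributes a multiple of $x_\beta$; the terms $x_\alpha\cdot x_\beta$ contribute $x_{\alpha\pm\beta}$ when $\langle\alpha,\beta\rangle=\mp2$, a multiple of $\beta(-1)^2\vacuum$ when $\alpha=\pm\beta$, and $0$ otherwise. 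Apply $\tau_e$ to the identity $e\cdot x_\beta=(e\cdot x_\beta)$ and compare the coefficient of $\beta(-1)^2\vacuum$ on both sides. On the left this coefficient is (up to a nonzero scalar) $d^e_{\{\pm\beta\}}$, which is nonzero by the definition of $L(4;e)$. Since $\tau_e$ fixes $\beta(-1)^2\vacuum$ if and only if $g(\beta)=\pm\beta$ — indeed $\tau_e(\beta(-1)^2\vacuum)=g(\beta)(-1)^2\vacuum$ by (\ref{Eq:fa}), and $\{\beta(-1)^2\vacuum\}$ spans a line in $M_H(1)_2$ — matching the coefficient of $\beta(-1)^2\vacuum$ against the coefficient of $g(\beta)(-1)^2\vacuum$ in $\tau_e(e\cdot x_\beta)=\tau_e(e)\cdot\tau_e(x_\beta)=e\cdot(\pm x_{g(\beta)})$ forces $g(\beta)$ to lie in the same line as $\beta$, i.e.\ $g(\beta)\in\{\pm\beta\}$ since $g$ is an isometry. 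This is essentially the content of the hint in the introduction: "the decomposition of $B$ with respect to the adjoint action of $e$, the action of $\tau_e$ on it, and explicit calculations on the Griess algebra."

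I expect the main obstacle to be bookkeeping rather than ideas: one must be careful that $e\cdot x_\beta$ may have a large $x_\gamma$-part (from pairs $\alpha$ with $\langle\alpha,\beta\rangle=\pm2$) and an $h_{ij}$-part, so isolating the $M_H(1)$-component and then the specific vector $\beta(-1)^2\vacuum$ within it requires knowing that $\beta(-1)^2\vacuum$ cannot be cancelled by or confused with $\alpha(-1)^2\vacuum$ for $\alpha\in L(4;e)$ non-parallel to $\beta$ — but two norm-$4$ vectors $\alpha,\beta$ with $\alpha\ne\pm\beta$ give linearly independent $\alpha(-1)^2\vacuum,\beta(-1)^2\vacuum$ in $M_H(1)_2=\Sym^2 H$, so this is fine. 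A secondary subtlety is the sign $\varepsilon(\alpha,\beta)$ and the sign in $\tau_e(x_\beta)=\pm x_{g(\beta)}$ from (\ref{Eq:f}); these signs affect the precise form of the coefficient identities but not the conclusion that $g(\beta)$ and $\beta$ span the same line, which is all that is claimed. Once $g(\beta)\in\{\pm\beta\}$ is established for every $\beta\in L(4;e)$, Lemma \ref{L4e} follows, and — as sketched in the introduction — one then passes to the sublattice $N$ generated by $L(4;e)$ together with the fixed information, arranges that $N/\sqrt2$ is even, and invokes Proposition \ref{TLSY}.
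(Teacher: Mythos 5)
There is a genuine gap at the step you call the ``crucial computation.'' Applying $\tau_e$ to $e\cdot x_\beta$ and rewriting it as $\tau_e(e)\cdot\tau_e(x_\beta)=e\cdot(\pm x_{g(\beta)})$ is an identity that holds automatically because $\tau_e$ is an automorphism fixing $e$; comparing $M_H(1)$-components on the two sides only reproduces the coefficient relation $d^e_{\{\pm g(\beta)\}}=\pm d^e_{\{\pm\beta\}}$ (the $M_H(1)$-part of $e\cdot x_\beta$ is $d^e_{\{\pm\beta\}}\beta(-1)^2\1$, which $\tau_e$ sends to $d^e_{\{\pm\beta\}}g(\beta)(-1)^2\1$, while the $M_H(1)$-part of $e\cdot(\pm x_{g(\beta)})$ is $\pm d^e_{\{\pm g(\beta)\}}g(\beta)(-1)^2\1$ --- both are multiples of the \emph{same} vector $g(\beta)(-1)^2\1$, so nothing forces $g(\beta)\in\{\pm\beta\}$). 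In fact no argument using only ``$\tau_e\in O(\hat L)$ and $\tau_e(e)=e$'' can succeed: for the Ising vector $\omega(E,\varphi)$ with $E\cong\sqrt2E_8$ there are plenty of lifts of automorphisms of $E$ that fix $e$ yet move individual norm-$4$ vectors off their lines. The property of $\tau_e$ that you cite in your first paragraph but never actually use is exactly what is needed: by Lemma \ref{LHLY} the eigenvalues of the adjoint action of $e$ on $B$ are only $2,0,1/2,1/16$, and $\tau_e$ acts by $-1$ precisely on $B^e(1/16)$.

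The paper's proof runs as follows. Suppose $g(\beta)\notin\{\pm\beta\}$. Then $x_\beta-\tau_e(x_\beta)$ is nonzero (by (\ref{Eq:f}), (\ref{Eq:fa}) one has $\tau_e(x_\beta)\in\{\pm x_{g(\beta)}\}$) and is a $(-1)$-eigenvector of $\tau_e$, hence lies in $B^e(1/16)$ by Lemma \ref{LHLY}, so $e\cdot(x_\beta-\tau_e(x_\beta))=\tfrac1{16}(x_\beta-\tau_e(x_\beta))$, whose projection $\mu$ onto $\Span_\C\{h_{ij}\}$ vanishes. On the other hand, Lemma \ref{Lprod}(2) together with the coefficient relation gives $\mu\bigl(e\cdot(x_\beta-\tau_e(x_\beta))\bigr)=d^e_{\{\pm\beta\}}(\beta-g(\beta))(-1)(\beta+g(\beta))(-1)\1\neq0$, a contradiction. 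Your proposal assembles the right ingredients (the projection $\mu$, the products in Lemma \ref{Lprod}, linear independence of $\alpha(-1)^2\1$ for non-proportional $\alpha$), and your first paragraph's derivation of the $g$-invariance of $L(4;e)$ with matching coefficients is correct, but without feeding the $1/16$-eigenvalue constraint into the computation the conclusion does not follow, so the proof as written is incomplete.
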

\begin{proof} By (\ref{Eq:f}) and (\ref{Eq:fa}), 
\begin{equation}
\tau_e(x_\beta)\in \{\pm x_{g(\beta)}\}.\label{Eq:taue}
\end{equation}
On the other hand, $\tau_e(e)=e$, (\ref{Eq:fa}) and (\ref{Eqe}) show
\begin{equation}
\tau_e(d^e_{\{\pm\beta\}}x_\beta)=d^e_{\{\pm g(\beta)\}}x_{g(\beta)}.\label{Eq:taue2}
\end{equation}
By (\ref{Eq:taue}) and (\ref{Eq:taue2}),
\begin{equation}
\frac{d^e_{\{\pm g(\beta)\}}}{d^e_{\{\pm\beta\}}}\in\{\pm1\}.\label{Eq:taue3}
\end{equation}

Suppose $g(\beta)\notin\{\pm\beta\}$.
Then $x_\beta-\tau_e(x_\beta)$ is non-zero, and it is an eigenvector of $\tau_e$ with eigenvalue $-1$.
By Lemma \ref{LHLY}, we have \begin{equation}
e\cdot(x_\beta-\tau_e(x_\beta))=\frac{1}{16}(x_\beta-\tau_e(x_\beta)).\label{Eq:e_1}
\end{equation}

Let us calculate the image of both sides of (\ref{Eq:e_1}) under the canonical projection $\mu:(V_L^+)_2\to {\rm Span}_\C\{h_{ij}\mid 1\le i\le j\le n\}$ with respect to the basis given in Lemma \ref{Lprod} (1).
By (\ref{Eq:taue}) the image of the right hand side of (\ref{Eq:e_1}) under $\mu$ is $0$:
\begin{equation}
\mu \left(\frac{1}{16}(x_\beta-\tau_e(x_\beta))\right)=0.\label{Eq:e_2}
\end{equation}

Let us discuss the left hand side of (\ref{Eq:e_1}).
By Lemma \ref{Lprod} (2) and (\ref{Eq:taue3}), we have
\begin{align*}
e\cdot(x_\beta-\tau_e(x_\beta))&=\left(\sum_{i\le j} c^e_{ij}h_{ij}+\sum_{\{\pm\alpha\}\subset L(4)} d^e_{\{\pm\alpha\}}x_\alpha\right)\cdot\left(x_\beta-\tau_e(x_\beta)\right)\\ 
&\in d^e_{\{\pm\beta\}}\left(\beta(-1)^2\1-g(\beta)(-1)^2\1\right)+{\rm Span}_\C\{x_\gamma\mid \{\pm \gamma\}\subset L(4)\}.
\end{align*}
Thus 
\begin{align*}
\mu (e\cdot(x_\beta-\tau_e(x_\beta)))&=d^e_{\{\pm\beta\}}\left(\beta(-1)^2\1-g(\beta)(-1)^2\1\right)\\
&=d^e_{\{\pm\beta \}}\left(\beta-g(\beta)\right)(-1)(\beta+g(\beta))(-1)\1.
\end{align*}
This is not zero by $g(\beta)\notin \{\pm \beta\}$, which contradicts (\ref{Eq:e_1}) and (\ref{Eq:e_2}).
Therefore $g(\beta)\in\{\pm\beta\}$.
\end{proof}

For $\varepsilon\in\{\pm\}$, set $L(4;e,\varepsilon)=\{v\in L(4;e)\mid g(v)=\varepsilon v\}$, $L^{e,\varepsilon}=\langle L(4;e,\varepsilon)\rangle_\Z$, and ${H}_1^\varepsilon=\langle L^{e,\varepsilon}\rangle_\C$.
Since $g$ preserves the inner product, ${H}_1={H}_1^+\perp {H}_1^-$ and $g$ acts on ${H}_2={H}_1^\perp$.
Let ${H}_2^\pm$ be $\pm1$-eigenspaces of $g$ in ${H}_2$.
For $\varepsilon\in\{\pm\}$, let $W^\varepsilon$ be a lattice of full rank in $H_2^\varepsilon$ isomorphic to an orthogonal direct sum of copies of $2A_1$.
Then
\begin{equation}
M_{H_2^\varepsilon}(1)\subset V_{W^\varepsilon}.\label{Subset:W}
\end{equation}

\begin{lemma}\label{Lfix} The Ising vector $e$ belongs to the VOA $V_{L^{e,+}\oplus W^+}^+\otimes V_{L^{e,-}\oplus W^-}^+$, and $\tau_e={\rm id}$ on this VOA.
\end{lemma}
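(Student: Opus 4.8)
The plan is to show two things: first that $e$ lies in the claimed tensor-factor VOA, and second that $\tau_e$ acts trivially on all of it. For the membership, I would start from the expansion (\ref{Eqe}). By Lemma \ref{L4e} every $\beta\in L(4;e)$ satisfies $g(\beta)\in\{\pm\beta\}$, so $L(4;e)$ is the disjoint union $L(4;e,+)\sqcup L(4;e,-)$, and hence the $x_\alpha$-part of $e$ is supported on $L^{e,+}\oplus L^{e,-}$. For the $h_{ij}$-part, I would argue that the quadratic form $\sum_{i\le j}c^e_{ij}h_{ij}$, viewed as an element of $\Sym^2 H$, must be $g$-invariant because $\tau_e(e)=e$ and $\tau_e$ acts on the $h_i$'s via $g$ (by (\ref{Eq:fa})); since $g$ acts as $+1$ on $H_1^+\perp H_2^+$ and as $-1$ on $H_1^-\perp H_2^-$, a $g$-invariant symmetric tensor has no cross terms between the $+$ and $-$ eigenspaces, so the $h_{ij}$-part decomposes as a sum of a tensor supported on $H_1^+\perp H_2^+$ and one supported on $H_1^-\perp H_2^-$. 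Combining, $e$ lies in $(V_{L^{e,+}}\otimes M_{H_2^+}(1))^+\otimes(V_{L^{e,-}}\otimes M_{H_2^-}(1))^+$, which by (\ref{Subset:W}) sits inside $V_{L^{e,+}\oplus W^+}^+\otimes V_{L^{e,-}\oplus W^-}^+$.

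Wait — I should be careful about what ``$g$-invariant'' means here and whether the decomposition of $e$ into the two tensor factors is legitimate as a decomposition of a single vector; what is really needed is only that $e$ itself, as written, has all its basis components indexed by pairs lying inside $L^{e,+}\oplus W^+$ (for the $\varepsilon=+$ directions) or $L^{e,-}\oplus W^-$ (for the $\varepsilon=-$ directions), with no terms mixing the two. The $x_\alpha$ terms are fine by the disjoint-union observation. For the $h_{ij}$ terms: apply $\tau_e$ to $e$ and use $\tau_e(e)=e$; since $\tau_e(h_i(-1)h_j(-1)\1)=g(h_i)(-1)g(h_j)(-1)\1$ and $g$ is $\pm 1$ on the four summands $H_1^+,H_1^-,H_2^+,H_2^-$, any component $c^e_{ij}h_{ij}$ with $h_i$ in a $+$ space and $h_j$ in a $-$ space gets multiplied by $-1$, while the others are fixed; comparing with the uniqueness of the expansion in the basis of Lemma \ref{Lprod}(1) forces all such mixed coefficients to vanish. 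This is the routine linear-algebra core and I would present it compactly.

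For the second claim, that $\tau_e=\id$ on $V_{L^{e,+}\oplus W^+}^+\otimes V_{L^{e,-}\oplus W^-}^+$: the automorphism $\tau_e\in O(\hat L)$ acts via formula (\ref{Eq:fa}), determined by $g\in\Aut(L)$ together with a sign character in $\hom(L,\Z/2\Z)$. On the Heisenberg part $M_H(1)$, $\tau_e$ acts through $g$; but on $H_1^+\perp H_2^+$ the map $g$ is the identity, and on $H_1^-\perp H_2^-$ it is $-1$, which after passing to the $\theta$-fixed (i.e.\ even) part $V^+$ acts trivially as well — so on $M_{H_1^+}(1)\otimes M_{H_2^+}(1)\otimes M_{H_1^-}(1)\otimes M_{H_2^-}(1)$ restricted to the relevant $+$-subspaces, $\tau_e$ is trivial. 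On the group-algebra part, $\tau_e(x_\gamma)\in\{\pm x_{g(\gamma)}\}=\{\pm x_\gamma\}$ for $\gamma$ in these sublattices, so $\tau_e$ acts on each $x_\gamma$ by a sign $\chi(\gamma)$ with $\chi\in\hom(L^{e,+}\oplus W^+\oplus L^{e,-}\oplus W^-,\Z/2\Z)$; I must show $\chi$ is trivial on the sublattices generated by $L(4;e,\pm)$ (on the $W^\pm$ part one is free to choose $W^\pm$ so that $\chi$ is trivial there, or absorb it, since $\tau_e$ was only constrained on $L$). The key point: for $\gamma\in L(4;e)$ we already know $d^e_{\{\pm\gamma\}}\neq 0$ and $\tau_e(d^e_{\{\pm\gamma\}}x_\gamma)=d^e_{\{\pm\gamma\}}x_\gamma$ (this is (\ref{Eq:taue2}) with $g(\gamma)=\gamma$, using $x_{-\gamma}=x_\gamma$ when $g(\gamma)=-\gamma$), forcing $\chi(\gamma)=1$ for every $\gamma\in L(4;e)$; since these generate $L^{e,+}$ and $L^{e,-}$ and $\chi$ is a homomorphism, $\chi$ is trivial on $L^{e,+}\oplus L^{e,-}$. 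Hence $\tau_e$ fixes every basis vector of the Heisenberg-times-$x_\gamma$ type in the tensor-factor VOA, so $\tau_e=\id$ there.

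The main obstacle is the bookkeeping in the second part: one must check that the sign character $\chi$ genuinely vanishes on a generating set — here the subtlety is the case $g(\gamma)=-\gamma$, where one uses $x_{-\gamma}=x_\gamma$ to still conclude $\tau_e(x_\gamma)=+x_\gamma$ from $\tau_e(e)=e$ — and that the $W^\pm$ can be chosen (or replaced by a sublattice on which $\chi$ is trivial) so that $\tau_e$ is genuinely trivial, not merely ``trivial up to the Heisenberg action''; once those points are settled, everything else is the uniqueness of expansions in the basis of Lemma \ref{Lprod}(1) and the explicit action (\ref{Eq:fa}).
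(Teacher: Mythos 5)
Your proposal is correct and follows essentially the same route as the paper: the paper compresses both halves into the single observation that, because $g=\bar{\tau_e}$ acts by $\pm1$ on $L^{e,\pm}\oplus W^\pm$, the $\tau_e$-fixed subspace of $V_{L^{e,+}\oplus W^+\oplus L^{e,-}\oplus W^-}^+$ is exactly $V_{L^{e,+}\oplus W^+}^+\otimes V_{L^{e,-}\oplus W^-}^+$, and then invokes $\tau_e(e)=e$. Your explicit killing of the mixed $h_{ij}$ cross terms and your verification (via $d^e_{\{\pm\gamma\}}\neq 0$ and $\tau_e(e)=e$) that the sign character is trivial on $L(4;e)$, hence on $L^{e,\pm}$, are precisely the computations underlying that one-line claim, so the two arguments coincide in substance.
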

\begin{proof} By Lemma \ref{L4e}, $L(4;e)=L(4;e,+)\cup L(4;e,-)$.
Hence, by (\ref{Eqe}) and (\ref{Subset:W}),
\begin{equation}
e\in (V_{L^{e,+}}\otimes M_{{H}_2^+}(1)\otimes V_{L^{e,-}}\otimes M_{{H}_2^-}(1))^+\subset V_{L^{e,+}\oplus W^+\oplus L^{e,-}\oplus W^-}^+.\label{Eq:Ve}
\end{equation}
Since $g$ acts by $\pm1$ on $L^{e,\pm}\oplus W^\pm$, the subspace of (\ref{Eq:Ve}) fixed by $\tau_e$ is 
$$V_{L^{e,+}\oplus W^+}^+\otimes V_{L^{e,-}\oplus W^-}^+.$$
Since $e$ is fixed by $\tau_e$, we have the desired result.
\end{proof}

We now prove the main theorem.

\begin{theorem}\label{MT} Let $L$ be an even lattice without roots.
Let $e$ be an Ising vector in $V_L^+$.
Then there is a sublattice $U$ of $L$ isomorphic to $\sqrt2A_1$ or $\sqrt2E_8$ such that $e\in V_{U}^+$.
\end{theorem}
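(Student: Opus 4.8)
The plan is to derive Theorem~\ref{MT} from Proposition~\ref{TLSY}: I will build from $e$ an even lattice $N$ with $N/\sqrt2$ even and $e\in V_N^+$, apply Proposition~\ref{TLSY} to get a sublattice $U$ of $N$ with $U\cong\sqrt2A_1$ or $\sqrt2E_8$ and $e\in V_U^+$, and then show that $U$ may be taken inside $L$. For the first part, set $V:=V_{L^{e,+}\oplus W^+}^+\otimes V_{L^{e,-}\oplus W^-}^+$ and $N:=L^{e,+}\oplus W^+\oplus L^{e,-}\oplus W^-$, so that $V\subseteq V_N^+$ and, by Lemma~\ref{Lfix}, $e\in V$ with $\tau_e=\mathrm{id}$ on $V$. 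Since neither $L^{e,\pm}\subseteq L$ nor the copies of $2A_1$ comprising $W^\pm$ have roots, $N$ has no roots, so $V$ is of CFT type with $V_1=0$. By Lemma~\ref{L4e} each $\al\in L(4;e)$ lies in $L(4;e,+)$ or $L(4;e,-)$, hence the Ising vector $\w^\pm(\al)$ of (\ref{Def:Ising1}) lies in $V_{L^{e,+}}^+$ or $V_{L^{e,-}}^+$, and so in $V$. Applying Lemma~\ref{LLSY} inside $V$ to each pair $(e,\w^\pm(\al))$ gives $e\in V^{\tau_{\w^\pm(\al)}}$; thus $e$ is fixed by the group $A$ generated by all $\tau_{\w^\pm(\al)}$, $\al\in L(4;e)$.

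Next I would extract the consequence of $e\in V^A$. By Lemma~\ref{prp:3.2} applied to the lattice $N$, the automorphism $\tau_{\w^\pm(\al)}$ acts on $V_N^+$, hence on $V$, trivially on the Heisenberg part $M_H(1)$ and by $x_\be\mapsto(-1)^{\la\al,\be\ra}x_\be$. In the weight-$2$ space of $V$ the Heisenberg vectors $h_{ij}$ are $A$-fixed, while $x_\be$ (for $\be\in L(4;e)$, occurring in (\ref{Eqe}) with $d^e_{\{\pm\be\}}\ne0$) acquires the sign $(-1)^{\la\al,\be\ra}$; so the $A$-invariance of $e$ forces $\la\al,\be\ra\in2\Z$ for all $\al,\be\in L(4;e)$. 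Consequently every inner product on $M:=\la L(4;e)\ra_\Z=L^{e,+}\oplus L^{e,-}$ is even, so $M/\sqrt2$ is an integral lattice generated by the norm-$2$ vectors $\al/\sqrt2$ ($\al\in L(4;e)$); since any $\Z$-combination of these has even norm, $M/\sqrt2$ is even. As $W^\pm/\sqrt2$ is an orthogonal sum of copies of $A_1$, the lattice $N/\sqrt2$ is even too, and $e\in V\subseteq V_N^+$, so Proposition~\ref{TLSY} applied to $N$ yields a sublattice $U\subseteq N$ with $U\cong\sqrt2A_1$ or $\sqrt2E_8$ and $e\in V_U^+$.

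It remains to realize $U$ inside $L$. Expanding $e$ in a basis of $(V_U^+)_2$, the $x_\al$ that occur are exactly those with $\al\in L(4;e)$, so $L(4;e)\subseteq U$ and hence $M\subseteq U$. A norm-$4$ vector of $N=M\oplus W^+\oplus W^-$ lies either in $M\subseteq L$ or is $\pm$ one of the pairwise-orthogonal generators of $W^+$ or $W^-$ (because $M$ has minimal norm $\ge4$). If $U\cong\sqrt2E_8$, then each of its $240$ norm-$4$ vectors is non-orthogonal to more than one other norm-$4$ vector of $U$, which excludes the generators of $W^\pm$; hence every norm-$4$ vector of $U$ lies in $M\subseteq L$ and $U=\la U(4)\ra_\Z\subseteq L$. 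If $U=\Z v\cong\sqrt2A_1$, then $M\subseteq\Z v$ is generated by norm-$4$ vectors and the only norm-$4$ vectors of $\Z v$ are $\pm v$, so either $M=\Z v$, whence $v\in M\subseteq L$ and $U=\Z v\subseteq L$, or $M=0$, in which case $L(4;e)=\emptyset$ and $e$ has no $x_\al$-terms, forcing $e\in\C\,v(-1)^2\1\subseteq V_{\Z v}^+$ — absurd, since $\C\,v(-1)^2\1$ contains no Ising vector. In every case $U$ is a sublattice of $L$ isomorphic to $\sqrt2A_1$ or $\sqrt2E_8$ with $e\in V_U^+$, as required. I expect this last step to be the main obstacle: the carrier lattice $N$ is strictly larger than anything available inside $L$ (the summands $W^\pm$ only house the Heisenberg factors), so pinning a copy of $\sqrt2A_1$ or $\sqrt2E_8$ down inside $L$ rests on the root-system bookkeeping above, together with the elementary facts that the $E_8$ root system admits no orthogonal splitting and that $M_\C(1)^+$ contains no Ising vector.
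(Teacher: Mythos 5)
Your argument is correct, and its skeleton is exactly the paper's: you form $V=V_{L^{e,+}\oplus W^+}^+\otimes V_{L^{e,-}\oplus W^-}^+$, invoke Lemmas \ref{Lfix}, \ref{LLSY} and \ref{prp:3.2}, and reduce to Proposition \ref{TLSY} via a lattice $N$ with $N/\sqrt2$ even. There are two real deviations, both harmless. First, where the paper passes through the subVOA generated by $(V^A)_2$ and Lemma \ref{Lsub} (hence the lattices $N^\varepsilon=\Span_\Z\{v\in L(4;e,\varepsilon)\mid \langle v,L(4;e)\rangle\subset 2\Z\}$ and auxiliary summands $K^\varepsilon$), you compare coefficients of the $A$-fixed vector $e$ in the standard basis and conclude that \emph{all} inner products between elements of $L(4;e)$ are even; this is a slightly stronger and cleaner intermediate statement, and it lets you dispense with Lemma \ref{Lsub} altogether. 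Second, your closing step that $U\subset L$ is more laborious than the paper's only because you read the $2A_1$ summands of $W^\pm$ as having norm-$4$ generators; in the paper's convention they have norm $8$ (which is why its proof speaks of generators of norm $4$ and $8$ and of $K^\pm(4)=\emptyset$), so $N(4)\subset L$ is immediate and $U\subset L$ follows at once from the fact that $\sqrt2A_1$ and $\sqrt2E_8$ are spanned by their norm-$4$ vectors. Your orthogonality count (a norm-$4$ vector of $\sqrt2E_8$ is non-orthogonal to many other norm-$4$ vectors, the $\sqrt2A_1$ case forces $M=\Z v$ or $M=0$, and $M=0$ is impossible since the rank-one Heisenberg line $\C v(-1)^2\1$ contains no Ising vector) is valid even in that worst case, so nothing is lost; it simply does by hand what the paper's normalization of $W^\pm$ and $K^\pm$ is designed to avoid.
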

\begin{proof} Set $V=V_{L^{e,+}\oplus W^+}^+\otimes V_{ L^{e,-}\oplus W^-}^+$.
By Lemma \ref{Lfix}, $e$ belongs to $V$ and $\tau_e={\rm id}$ on $V$.
Let $A=\langle \tau_{\omega^\pm(\beta)}\mid \beta\in L(4;e)\rangle$.
By Lemma \ref{LLSY}, $e$ belongs to the subVOA $V^A$ of $V$ fixed by $A$.
Since $e$ is a weight $2$ element, it is contained in the subVOA generated by $(V^A)_2$.
By Lemmas \ref{Lsub} and \ref{prp:3.2} and (\ref{Subset:W}) (cf.\ (\ref{Eq:Ve})),
$$e\in V_{N^+\oplus K^+}^+\otimes V_{N^-\oplus K^-}^+\subset V_N^+,$$ where for $\varepsilon\in\{\pm\}$, $N^\varepsilon={\rm Span}_\Z\{v\in L(4;e,\varepsilon)\mid \langle v, L(4;e)\rangle\in 2\Z\}$, $K^\varepsilon$ is a lattice of full rank in $(\langle N^\varepsilon\rangle_\C)^\perp\cap(H_1^\varepsilon\oplus H_2^\varepsilon)$ isomorphic to an orthogonal direct sum of copies of $2A_1$, and $N=N^+\oplus K^+\oplus N^-\oplus K^-$.
Since $N$ is generated by norm $4$ and $8$ vectors, and the inner products of the generator belong to $2\Z$, the lattice $N/\sqrt2$ is even.
By Proposition \ref{TLSY}, there is a sublattice $U$ of ${N}$ isomorphic to $\sqrt2A_1$ or $\sqrt2E_8$ such that $e\in V_U^+$.
It follows from $K^+(4)=K^-(4)=\emptyset$ that ${N}(4)=N^+(4)\cup N^-(4)\subset L$.
Since $\sqrt2A_1$ and $\sqrt2E_8$ are spanned by norm $4$ vectors as lattices, we have $U\subset L$.
Hence $V_U^+$ is a subVOA of $V_L^+$.
\end{proof}

As an application of the main theorem, we count the total number of Ising vectors in $V_L^+$ for even lattice $L$ without roots.

Let us describe Ising vectors in $V_L^+$.
The Ising vector $\omega^\pm(\alpha)$ associated to $\alpha\in L(4)$ was described in (\ref{Def:Ising1}) as follows:
$$\omega^\pm(\al)=\frac{1}{16} \al(-1)^2\cdot \mathbf{1}\pm\frac{1}4 x_\alpha.$$

Let $E$ be an even lattice isomorphic to $\sqrt2E_8$ and $\{u_i\mid 1\le i\le 8\}$ an orthonormal basis of $\C\otimes_{\Z}E$.
We consider the trivial $2$-cocycle of $\C\{E\}$ for $V_E$.
Then for $\varphi\in {\rm Hom}(E,\Z/2\Z)(\cong (\Z/2\Z)^8)$
\begin{equation*}
\omega(E,\varphi)=\frac{1}{32}\sum_{i=1}^8u_i(-1)^2\cdot\1+ \frac{1}{32} \sum_{\{\pm\alpha\}\subset E(4)} (-1)^{\varphi(\alpha)}x_\alpha\label{Def:Ising2}
\end{equation*}
is an Ising vector in $V_E^+$ (\cite{DLMN,Gr}).
Since $E(4)$ spans $E$ as a lattice, $\omega(E,\varphi)=\omega(E,\varphi')$ if and only if $\varphi=\varphi'$.
Hence $V_E^+$ has $256$ Ising vectors of form $\omega(E,\varphi)$.
Thus $V_{\sqrt2A_1}^+$ and $V_{\sqrt2E_8}^+$ has exactly $2$ and $496$ Ising vectors, respectively (\cite[Proposition 4.2 and 4.3]{LSY}).

\begin{corollary} Let $L$ be an even lattice without roots.
Then the number of Ising vectors in $V_L^+$ is given by $$ |L(4)|+256\times |\{U\subset L\mid U\cong \sqrt2E_8\}|.$$
\end{corollary}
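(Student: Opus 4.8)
The plan is to count Ising vectors in $V_L^+$ by combining the classification theorem with the explicit descriptions just given. By Theorem \ref{MT}, every Ising vector $e$ in $V_L^+$ lies in $V_U^+$ for some sublattice $U\subset L$ with $U\cong\sqrt2A_1$ or $U\cong\sqrt2E_8$. First I would observe that the Ising vectors of $V_U^+$ are themselves Ising vectors of $V_L^+$, since $V_U^+$ is a subVOA; so the total count is the size of the union, over all such $U$, of the sets of Ising vectors of the various $V_U^+$. The stated values are $|V_{\sqrt2A_1}^+{\rm -Ising}|=2$ and $|V_{\sqrt2E_8}^+{\rm -Ising}|=496$, the latter being the $256$ vectors $\omega(E,\varphi)$ together with $240=|E(4)|$ vectors of the form $\omega^\pm(\alpha)$ (note $|E(4)|=240$ for $E\cong\sqrt2E_8$, giving $120$ pairs $\{\pm\alpha\}$, i.e.\ $240$ vectors $\omega^\pm(\alpha)$, and $240+256=496$).

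Next I would organize the union by the type of the generating lattice. The vectors of the form $\omega^\pm(\alpha)$ with $\alpha\in L(4)$ are exactly the Ising vectors coming from rank-one sublattices $\Z\alpha\cong\sqrt2A_1$; there are $|L(4)|$ of them (one for each $\alpha\in L(4)$, and distinct $\alpha$ give distinct vectors since $\omega^\pm(\alpha)$ determines $\pm\frac14 x_\alpha$ hence the pair $\{\pm\alpha\}$ together with the sign). Moreover each such $\omega^\pm(\alpha)$ also arises inside any $V_E^+$ with $E\cong\sqrt2E_8$ containing $\alpha$, as one of its $240$ vectors $\omega^\pm(\beta)$, $\beta\in E(4)$. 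The remaining Ising vectors are those of the form $\omega(E,\varphi)$; I would show these are genuinely ``new'' — not of the form $\omega^\pm(\alpha)$ — because $\omega(E,\varphi)$ has nonzero $x_\alpha$-coefficient for all $120$ pairs $\{\pm\alpha\}\subset E(4)$, whereas $\omega^\pm(\alpha)$ has support on a single pair. Hence the total count is $|L(4)|$ plus $256$ times the number of sublattices $E\subset L$ with $E\cong\sqrt2E_8$.

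The one point requiring genuine care is that the $256$ vectors $\omega(E,\varphi)$ attached to \emph{different} copies $E\neq E'$ of $\sqrt2E_8$ in $L$ are pairwise distinct, so there is no overcounting across the $\sqrt2E_8$-sublattices. I would argue this from the support: the set $\{\{\pm\alpha\}\mid d^{\omega(E,\varphi)}_{\{\pm\alpha\}}\neq0\}$ equals $\{\{\pm\alpha\}\mid\alpha\in E(4)\}$, and since $E$ is spanned by $E(4)$ as a lattice, this support determines $E$; thus $\omega(E,\varphi)=\omega(E',\varphi')$ forces $E=E'$, and then $\varphi=\varphi'$ by the injectivity already noted in the text. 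Combining: the Ising vectors split into the $|L(4)|$ vectors of $\sqrt2A_1$-type and, for each of the $|\{U\subset L\mid U\cong\sqrt2E_8\}|$ copies of $\sqrt2E_8$, its $256$ vectors $\omega(E,\varphi)$, with no further collisions between the two families (by the support argument) nor within the second family. This yields the claimed formula
\[
|L(4)|+256\times|\{U\subset L\mid U\cong\sqrt2E_8\}|.
\]
The main obstacle is purely bookkeeping: one must be certain that the ``$\omega^\pm(\beta)$-type'' Ising vectors living inside a $\sqrt2E_8$-sublattice are already counted among $|L(4)|$ and are not double-counted per sublattice — this follows because each such $\beta$ lies in $L(4)$ and the vector $\omega^\pm(\beta)$ depends only on $\beta$, not on which $E$ one views it in.
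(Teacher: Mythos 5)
Your proposal is correct and takes essentially the same route as the paper: the upper bound comes from Theorem \ref{MT} combined with the known lists of Ising vectors in $V_{\sqrt2A_1}^+$ and $V_{\sqrt2E_8}^+$ ($2$ and $240+256=496$), and all collisions are ruled out by support arguments resting on the linear independence in Lemma \ref{Lprod} (1) and on $\langle E(4)\rangle_\Z=E$. You merely spell out the bookkeeping (that the $240$ vectors $\omega^\pm(\beta)$ inside each $\sqrt2E_8$ are already counted in $|L(4)|$, and that different copies of $\sqrt2E_8$ give disjoint families of $\omega(E,\varphi)$) a bit more explicitly than the paper does.
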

\begin{proof} Set $m=|L(4)|+256\times |\{E\subset L\mid E\cong \sqrt2E_8\}|$.
Theorem \ref{MT} shows that the number of Ising vectors in $V_L^+$ is less than or equal to $m$.
Let us show that there are exactly $m$ Ising vectors in $V_L^+$, that is, the Ising vectors $\omega^\pm(\alpha)$ and $\omega(E,\varphi)$ are distinct.
By Lemma \ref{Lprod} (1), $\omega^\varepsilon(\alpha)=\omega^{\delta}(\beta)$ if and only if $\alpha=\beta$ and $\varepsilon=\delta$.
Moreover, $\omega^\varepsilon(\alpha)\neq \omega(E,\varphi)$ for all $\alpha\in L(4)$, $L\supset E\cong \sqrt2E_8$ and $\varphi\in{\rm Hom}(E,\Z/2\Z)$.

Let $E_1,E_2$ be sublattices of $L$ such that $E_1\cong E_2\cong \sqrt2E_8$.
Let $\varphi_i\in {\rm Hom}(E_i,\Z/2\Z)$, $i=1,2$.
Then it follows from Lemma \ref{Lprod} (1) and $\langle E_i(4)\rangle_\Z=E_i$ that $\omega(E_1,\varphi_1)=\omega(E_2,\varphi_2)$ if and only if $E_1=E_2$ and $\varphi_1=\varphi_2$.
Therefore, there are exactly $m$ Ising vectors in $V_L^+$.
\end{proof}

\paragraph{\bf Acknowledgement.} The author thanks the referee for valuable advice.

\end{document}